\theoremstyle{plain}
\newtheorem{thm}{Theorem}[section]
\newtheorem{lem}[thm]{Lemma}
\newtheorem{prop}[thm]{Proposition}
\newtheorem{cor}[thm]{Corollary}
\theoremstyle{definition}
\newtheorem{defn}[thm]{Definition}
\newtheorem{rem}[thm]{Remark}
\newcommand{\Z}{\mathbb Z}
\newcommand{\Q}{\mathbb Q}
\newcommand{\C}{\mathbb C}
\newcommand\sO{{\mathcal O}}
\DeclareMathOperator{\Pic}{Pic}
\DeclareMathOperator{\Alb}{Alb}
\DeclareMathOperator{\supp}{Supp}
\newcommand{\ssm}{\smallsetminus}
\newenvironment{pf}{\begin{proof}}{\end{proof}}
\title{Moving Seshadri Constants, and Coverings of Varieties of Maximal Albanese Dimension}
 \author{\small{Luca F. Di Cerbo} \\ \scriptsize{University of Florida} \\ \footnotesize{\textsf{ldicerbo@ufl.edu}} \and 
\small{Luigi Lombardi}\footnote{Partially supported by  SIR 2014 AnHyC: ``Analytic aspects in complex and hypercomplex geometry'' (code RBSI14DYEB), Grant 261756 of the Research Council of Norway, and the Simons Foundation.} \\ \scriptsize{University of Milan}\\ 
\footnotesize{\textsf{luigi.lombardi@unimi.it}}}
\date{}
\begin{document}

\maketitle

\begin{abstract}
Let $X$ be a smooth projective complex variety of maximal Albanese dimension, and let $L \to X$ be a big line bundle. We prove that the moving Seshadri constants of the pull-backs of $L$ to suitable finite  abelian \'etale covers of $X$  are arbitrarily large. As an application,  given any integer  $k\geq 1$,  there exists an abelian \'etale cover $p\colon X' \to X$ such that the adjoint system $\big|K_{X'} + p^*L \big|$  separates $k$-jets away from the augmented base locus of $p^*L$, and the exceptional locus of the pull-back of the Albanese map of $X$ under $p$.

\end{abstract}
%%%%%%%%%%%%%%%%%%%%
\vspace{8cm}

\tableofcontents

%%%%%%%%%%%%%%%%%%%%%%%%%%%%%%%%
\section{Introduction}\label{Introduction}
%%%%%%%%%%%%%%%%%%%%%%%%%%%%%%%%

The moving Seshadri constants $\varepsilon_{\rm mov}(L ; x)$ of a big line bundle $L$ on a smooth projective variety are  asymptotic invariants attached to the sequence $\{ L^{\otimes m} \}_{m\geq 1}$ (\emph{cf}. for instance \cite{Nak02} and \cite{Ein09}). They extend  the definition and the geometric properties of the  Seshadri constants $\varepsilon (L ; x)$ attached to  nef line bundles, as introduced by Demailly in \cite{Dem90}, to the setting of big line bundles.  For instance, analogously to $\varepsilon (L ; x)$, the moving Seshadri constants control the rates of growth of the orders of jets that are separated by the big line bundles $L^{\otimes m}$ at a point $x$ as a function of $m$.  

In this paper, we prove that the  moving Seshadri constants $\varepsilon_{\rm mov}(L ; x)$  of a big line bundle on a smooth projective complex variety of maximal Albanese dimension are arbitrarily large on suitable  abelian \'etale coverings of the variety. In other words, they are \emph{virtually} unbounded, \emph{i.e}, unbounded up to a sequence of \'etale covers. A similar problem was asked by Hwang  (\emph{cf}. \cite[Problem 2.6.2]{Di Rocco}) in the setting of ample line bundles on smooth projective varieties with large algebraic fundamental group up to (not necessarily abelian) regular \'etale covers. Hwang's original problem was answered affirmatively in \cite[Theorems 1.3 and 1.4]{DiC19}, and furthermore generalized to the case of big and nef line bundles (\emph{cf}. \emph{loc. cit.} Theorem 1.7). Here we focus on varieties of maximal Albanese dimension, and we study this problem within the class of abelian \'etale covers induced by the Albanese map.  The main result of this paper is the following.
%Our following result extend the main theorems of \cite{DiC19} in two directions. On one hand, we remove the nefness condition  on the line bundle, on the other hand, we consider  a class of  varieties whose (algebraic) fundamental group is not necessarily large.

\begin{thm}\label{Mainmoving}
Let $X$ be a smooth projective variety such that the Albanese map $\alpha \colon X \to \Alb(X)$ is generically finite onto its image, and let $L$ be a big line bundle on $X$. For any integer $N>0$,   there exists an isogeny of abelian varieties  $a\colon A' \to \Alb(X)$ of finite degree  together with a commutative diagram   
\begin{equation}\label{maindiag}
\begin{tikzcd}
X'  \arrow[r, "{\alpha'}"] \arrow[d, "p"]
& A' \arrow[d, "a"] \\
X \arrow[r, "\alpha" ]
& \Alb(X)
\end{tikzcd}
\end{equation}
such that  $\varepsilon_{\rm mov}(p^*L ; x) \, \geq \, N $
for any $x\notin p^{-1}\mathbf{B}_+(L) \, \cup \, \mathbf{Exc}(\alpha')$.
\end{thm}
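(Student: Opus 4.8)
The plan is to take for $p$ the base change of the Albanese map along a multiplication isogeny of $\Alb(X)$, so that the bigness of $L$ gets reduced to a single ample class pulled back from $\Alb(X)$, whose Seshadri constant is then amplified by the cover. Set $A=\Alb(X)$, $g=\dim A$, fix a \emph{symmetric} ample line bundle $H$ on $A$ (replace $H$ by $H\otimes(-1)^*H$ if needed), and for $n\ge1$ let $X_n$ be the fibre product of $\alpha\colon X\to A$ and $[n]\colon A\to A$, with projections $p_n\colon X_n\to X$ and $\alpha_n\colon X_n\to A$; then \eqref{maindiag} holds with $X'=X_n$, $A'=A$, $\alpha'=\alpha_n$, $p=p_n$, $a=[n]$. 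Since $[n]$ is étale of degree $n^{2g}$, the cover $p_n$ is finite étale, $X_n$ is smooth, projective and connected (as $\pi_1(X)\to\pi_1(A)$ is surjective), $p_n^*L$ is big, and $\alpha_n$ is again generically finite onto its image, so $\alpha_n^*H$ is big on $X_n$. The crucial identity is that, $H$ being symmetric, $[n]^*H\cong H^{\otimes n^2}$, so that, using $\alpha\circ p_n=[n]\circ\alpha_n$,
\[
p_n^*\big(\alpha^*H\big)\;=\;\alpha_n^*\big([n]^*H\big)\;\cong\;\big(\alpha_n^*H\big)^{\otimes n^2}:
\]
passing to $X_n$ multiplies the ``Albanese part'' of the positivity of $L$ by $n^2$ while leaving its intrinsic size bounded.

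Next I would establish a Kodaira-type decomposition of $L$ through $\alpha^*H$, uniform off $\mathbf B_+(L)$: there is a fixed $t_0\in\Q_{>0}$ such that for every $x_0\in X\setminus\mathbf B_+(L)$ one can write $L\sim_{\Q}t_0\,\alpha^*H+E$ with $E\ge0$ an effective $\Q$-divisor and $x_0\notin\Supp E$. Indeed, fix an ample $\Q$-divisor $A_0$ and an effective $E_0$ with $L\sim_{\Q}A_0+E_0$, and choose $t_0$ small enough that $A_0-t_0\,\alpha^*H$ is still ample (the ample cone is open and $\alpha^*H$ is nef); shrinking $t_0$ further and using that the set $\{\xi\in N^1(X)_{\R}:\mathbf B_+(\xi)\subseteq\mathbf B_+(L)\}$ is open — i.e.\ that the augmented base locus is upper semicontinuous, cf.\ \cite{Ein09} — one gets $\mathbf B_+(L-t_0\,\alpha^*H)\subseteq\mathbf B_+(L)$. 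Then for $x_0\notin\mathbf B_+(L)$ one writes $L-t_0\,\alpha^*H$ as an ample $\Q$-divisor plus an effective one avoiding $x_0$, and replaces the ample summand by a general effective $\Q$-divisor avoiding $x_0$.

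Now I would combine the two. Let $x\in X_n$ with $x\notin p_n^{-1}\mathbf B_+(L)\cup\mathbf{Exc}(\alpha_n)$, and put $x_0=p_n(x)$, so $x_0\notin\mathbf B_+(L)$. Pulling back the decomposition of $L$ and using the crucial identity,
\[
p_n^*L\;\sim_{\Q}\;(t_0n^2)\,\alpha_n^*H\,+\,p_n^*E,\qquad x\notin\Supp\big(p_n^*E\big),
\]
the latter because $p_n$ is étale and $x_0\notin\Supp E$. Two elementary properties of moving Seshadri constants now apply: homogeneity, and the inequality $\varepsilon_{\rm mov}(M;y)\ge\varepsilon_{\rm mov}(B;y)$ whenever $B$ is big and $M\sim_{\Q}B+(\text{effective})$ with $y$ off that effective part — obtained by multiplying the sections of $|qB|$ that separate $k$-jets at $y$ by the canonical section of ($q$ times) the effective part, a unit at $y$. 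This gives
\[
\varepsilon_{\rm mov}(p_n^*L;x)\;\ge\;t_0n^2\cdot\varepsilon_{\rm mov}(\alpha_n^*H;x).
\]
Finally, since $\alpha_n$ is étale at $x$ (because $x\notin\mathbf{Exc}(\alpha_n)$), pulling back the sections of $|qH|$ that separate $k$-jets at $\alpha_n(x)$ shows $\varepsilon_{\rm mov}(\alpha_n^*H;x)\ge\varepsilon\big(H;\alpha_n(x)\big)=\varepsilon(H)>0$, the Seshadri constant of the ample $H$ being the same at every point of $A$ by translation invariance. Hence $\varepsilon_{\rm mov}(p_n^*L;x)\ge t_0n^2\,\varepsilon(H)$ for all such $x$, and it is enough to choose $n$ with $t_0n^2\,\varepsilon(H)\ge N$.

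The step I expect to be the main obstacle is the \emph{uniformity} of $t_0$ over all of $X\setminus\mathbf B_+(L)$ in the second paragraph: it rests on the stability of the augmented base locus under a small perturbation by the fixed nef class $\alpha^*H$, and without it one would only obtain $\varepsilon_{\rm mov}(p^*L;x)\to\infty$ along covers depending on $x$. A secondary issue requiring care is the precise bookkeeping of moving Seshadri constants — homogeneity, monotonicity under adding an effective divisor away from the point, and behaviour under the étale base change — together with the convention that $\mathbf{Exc}(\alpha_n)$ is the locus where $\alpha_n$ fails to be étale, which is what guarantees that sections pulled back from $A$ actually separate jets at points outside it.
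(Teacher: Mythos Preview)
Your approach is genuinely different from the paper's and considerably more direct. The paper never exploits the identity $[n]^*H\cong H^{\otimes n^2}$; instead it first shows, for an \emph{ample} $A$ on $X$, that $\varepsilon(\overline q_i^*A;x)\to\infty$ off $\mathbf{Exc}(\alpha_i)$ along a tower converging to the universal Albanese cover, using Riemannian convergence (Theorem~\ref{Wallach}), volume growth of non-exceptional subvarieties (Lemma~\ref{Basic Construction2}), and the Ein--Lazarsfeld--Nakamaye/Angehrn--Siu freeness machinery (Lemma~\ref{Basic Construction3}). It then reduces the big case to the ample case via finitely many decompositions $L\sim_{\Q}A_i+E_i$ with $\bigcap_i\Supp E_i=\mathbf B_+(L)$. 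Your route bypasses all of this analytic input at the price of restricting to the specific isogenies~$[n]$ --- harmless, since only existence is claimed.

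There is, however, one genuine gap. Under the paper's convention, $\mathbf{Exc}(\alpha_n)$ is the union of the \emph{positive-dimensional fibres} of $\alpha_n$, not the locus where $\alpha_n$ fails to be an immersion; you explicitly switch conventions in your last paragraph, but the theorem is stated with the former. These loci differ in general: $d\alpha_x$ is dual to the evaluation $H^0(X,\Omega^1_X)\to\Omega^1_X|_x$, which can drop rank at points lying in zero-dimensional fibres whenever $\Omega^1_X$ is not globally generated there. Hence ``$\alpha_n$ is \'etale at $x$ because $x\notin\mathbf{Exc}(\alpha_n)$'' is false as written, and the jet-pullback argument does not go through. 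The inequality you actually need, $\varepsilon(\alpha_n^*H;x)\ge\varepsilon\big(H;\alpha_n(x)\big)$, is nevertheless correct under the weaker hypothesis that no curve through $x$ is contracted (which \emph{is} what $x\notin\mathbf{Exc}(\alpha_n)$ guarantees): use the curve characterisation of the Seshadri constant of the nef class $\alpha_n^*H$, together with the elementary inequality $\mult_x C\le\deg(\alpha_n|_C)\cdot\mult_{\alpha_n(x)}\alpha_n(C)$, obtained by pulling back a generic linear form at $\alpha_n(x)$ along the branches of~$C$. As for the secondary point, the uniformity of $t_0$ is correct, but the blanket openness of $\{\xi:\mathbf B_+(\xi)\subseteq\mathbf B_+(L)\}$ is not quite what \cite{Ein09} gives; the cleanest fix is to pick a small ample $A$ with $\mathbf B_+(L-A)=\mathbf B_+(L)$ and then take $t_0$ so small that $A-t_0\,\alpha^*H$ stays ample, or equivalently to use the finitely-many-decompositions description of $\mathbf B_+(L)$ as the paper does.
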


Here $\mathbf{B}_+(L)$ denotes the augmented base locus of $L$, while  $\mathbf{Exc}(\alpha')$ stands for  the exceptional locus of $\alpha'$, \emph{i.e.}, the union of all positive-dimensional fibers. An ingredient of the proof of Theorem \ref{Mainmoving} is the fact that the augmented base locus of  a big line bundle  behaves well under  pull-backs  of finite morphisms. In order to show this fact, in Lemma \ref{Lemma1} we will employ the description of $\mathbf{B}_+(L)$ in terms  of currents, together with a result of Favre \cite{Fav99} describing the local behavior of the pull-back of a current under a surjective map between smooth varieties.

We apply Theorem \ref{Mainmoving} in order to study the positivity of adjoint big line bundles on varieties of maximal Albanese dimension up to  abelian  \'etale covers. This is in the spirit of \cite{Hwang-To}, \cite{Yeung1} and \cite{Wang}, where the authors study this problem for other classes of manifolds up to non-abelian   regular \'etale covers. 
\begin{thm}\label{maincor}
Let $X$ be a smooth projective variety such that the Albanese map $\alpha \colon X \to \Alb(X)$ is generically finite onto its image, and let $L$ be a big line bundle on $X$.
\begin{enumerate}
\item[(i)] There exists an \'etale cover $p\colon X' \to X$ as in \eqref{maindiag} such that for every $P\in \Pic^0(X')$ the linear series $\big|K_{X'} +  p^*L + P \big|$ is very ample away from $p^{-1}\mathbf{B}_+(L) \cup \mathbf{Exc}(\alpha')$.
\item[(ii)] For any integer $k\geq 0$, there exists an \'etale cover $p \colon X' \to X$ as in \eqref{maindiag} such that for every $P\in \Pic^0(X')$ the linear series  $\big| K_{X'} + p^*L + P \big|$ separates $k$-jets at any point $x\notin p^{-1}\mathbf{B}_+(L) \cup \mathbf{Exc}(\alpha')$.  
\end{enumerate}
\end{thm}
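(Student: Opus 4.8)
The plan is to read off both statements from Theorem~\ref{Mainmoving} together with the standard dictionary between the moving Seshadri constants of a big line bundle and the separation of jets (and very ampleness) by the associated adjoint system, after checking that the twist by $P\in\Pic^0(X')$ is harmless for purely numerical reasons. Set $n:=\dim X=\dim X'$ (note that $p$ is \'etale). Fix $P\in\Pic^0(X')$ and write $M:=p^*L+P$: this is a big line bundle numerically equivalent to $p^*L$. For big line bundles, both the augmented base locus and the moving Seshadri constant depend only on the numerical equivalence class (\emph{cf}. \cite{Ein09}), so $\mathbf{B}_+(M)=\mathbf{B}_+(p^*L)$ and $\varepsilon_{\rm mov}(M;x)=\varepsilon_{\rm mov}(p^*L;x)$ for all $x$; and $\mathbf{B}_+(p^*L)=p^{-1}\mathbf{B}_+(L)$ by Lemma~\ref{Lemma1}. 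Hence the locus $Z:=p^{-1}\mathbf{B}_+(L)\cup\mathbf{Exc}(\alpha')$ in the statement equals $\mathbf{B}_+(M)\cup\mathbf{Exc}(\alpha')$, is independent of $P$, and is precisely the set outside of which Theorem~\ref{Mainmoving} bounds $\varepsilon_{\rm mov}(p^*L;x)$ from below; so a single \'etale cover will work simultaneously for all $P$.

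For part (ii), given $k\ge 0$ apply Theorem~\ref{Mainmoving} with $N=n+k+1$, producing $p\colon X'\to X$ and $\alpha'\colon X'\to A'$ as in \eqref{maindiag}. Then for every $P\in\Pic^0(X')$ and every $x\notin Z$ we have $x\notin\mathbf{B}_+(M)$ and $\varepsilon_{\rm mov}(M;x)=\varepsilon_{\rm mov}(p^*L;x)\ge n+k+1>n+k$. By the jet-separation criterion for moving Seshadri constants (the big analogue of classical results of Demailly; see \cite{DiC19}) — if $M$ is big, $x\notin\mathbf{B}_+(M)$ and $\varepsilon_{\rm mov}(M;x)>n+k$, then $|K_{X'}+M|$ separates $k$-jets at $x$ — the system $|K_{X'}+p^*L+P|$ separates $k$-jets at every $x\notin Z$, which is (ii).

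For part (i), apply Theorem~\ref{Mainmoving} with a suitable integer $N>2n$ (for instance $N=2n+1$). Very ampleness of $|K_{X'}+M|$ away from $Z$ means that $|K_{X'}+M|$ separates $1$-jets at every point of $X'\setminus Z$ and separates any two distinct points of $X'\setminus Z$. The first is the case $k=1$ of the criterion used in part (ii), since $\varepsilon_{\rm mov}(M;x)\ge N>n+1$. For the second, one invokes the very-ampleness criterion in terms of moving Seshadri constants (the big analogue of Demailly's numerical criterion for very ample line bundles, again \cite{DiC19}): for distinct $x,y\in X'\setminus Z$ one has $\varepsilon_{\rm mov}(M;x),\varepsilon_{\rm mov}(M;y)\ge N>2n$, which is above the threshold needed to build a singular Hermitian metric on $M$, strictly positively curved away from $\mathbf{B}_+(M)$, with isolated logarithmic poles of controlled Lelong number at $x$ and at $y$; Nadel vanishing applied to the ideal sheaf of $\{x,y\}$ then gives that $H^0(X',K_{X'}+M)$ surjects onto the jets at $x$ and $y$, separating the two points. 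As $Z$ and these estimates are independent of $P$, this proves (i).

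The arithmetic in the previous two paragraphs — the interplay of Theorem~\ref{Mainmoving}, Lemma~\ref{Lemma1}, and the numerical invariance of $\varepsilon_{\rm mov}$ and $\mathbf{B}_+$ — is routine. The substantive ingredient, which I would treat as known and cite from \cite{DiC19}, is the validity of the jet-separation and very-ampleness criteria in the \emph{big} (rather than ample or nef) setting, with $\mathbf{B}_+(M)$ as the bad locus; the delicate step there is to convert a numerical lower bound on $\varepsilon_{\rm mov}(M;x)$ into a singular metric on $M=p^*L+P$ that is strictly positively curved \emph{off} $\mathbf{B}_+(M)$ and has an isolated analytic singularity of prescribed Lelong number at the chosen point(s), so that Nadel vanishing applies off $\mathbf{B}_+(M)$ — and this is exactly where the hypothesis $x\notin p^{-1}\mathbf{B}_+(L)$ is used. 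Granting these criteria, parts (i) and (ii) are immediate, and in fact could be merged, since $1$-jet separation together with pairwise point separation (for $N>2n$) already yields the very-ampleness assertion.
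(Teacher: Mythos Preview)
Your proof is correct and follows essentially the same route as the paper: both arguments combine Theorem~\ref{Mainmoving} with the numerical invariance of $\mathbf{B}_+$ and $\varepsilon_{\rm mov}$ (so that the twist by $P\in\Pic^0(X')$ is harmless) and then invoke the standard jet-separation and very-ampleness criteria for adjoint systems in terms of moving Seshadri constants, which the paper records as Proposition~\ref{propseshadri}. One small correction: the reference for these criteria in the \emph{big} setting is \cite[Proposition~6.8]{Ein09} (together with \cite[Proposition~5.1.19(ii)]{Laz1} for very ampleness), not \cite{DiC19}, which treats the big and nef case.
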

Thus adjoint big line bundles on varieties of maximal Albanese dimension acquire  more and more positivity on certain regular \'etale covers of large degree. In fact, for unconditional results concerning the same type of positivity, higher multiples of  $K_X+L$ are needed. For instance, by \cite[Theorem 5.8]{PP}, 
the systems  $\big|3(K_X+L)\big|$ are  very ample away from $\mathbf{Exc}(\alpha)$ if $L$ is a  big and nef line bundle on a smooth projective variety of maximal Albanese dimension.

When applied to the canonical bundle $L=K_X$ of a variety of general type, Theorem \ref{maincor} assumes a slightly stronger formulation thanks to the main result of \cite{BBP13}. In this way, we recover and extend \cite[Theorem 4.1]{BPS1} by partly describing the locus  where the system $\big|2K_{X'} + P \big|$  is very ample, or  it separates $k$-jets.
\begin{cor}\label{maincor2}
Let $X$ be a smooth projective variety of general type such that the Albanese map $\alpha \colon X \to \Alb(X)$ is generically finite onto its image.
\begin{enumerate}
\item[(i)] There exists an \'etale cover $p\colon X' \to X$ as in \eqref{maindiag} such that for every $P\in \Pic^0(X')$ the linear series $\big|2K_{X'} + P \big|$ is very ample away from $\mathbf{Exc}(\alpha')$.
\item[(ii)] For any $k\geq 0$, there exists an \'etale cover $p \colon X' \to X$ as in \eqref{maindiag} such that for every $P\in \Pic^0(X')$ the linear series $\big|2K_{X'} + P \big|$ separates $k$-jets at any point $x\notin \mathbf{Exc}(\alpha')$.  
\end{enumerate}
\end{cor}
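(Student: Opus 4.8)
The plan is to \emph{deduce} Corollary \ref{maincor2} from Theorem \ref{maincor} applied to the big line bundle $L=K_X$, the only extra ingredient being the main result of \cite{BBP13}, which is used to absorb the base‑locus term into $\mathbf{Exc}(\alpha')$. Since $X$ is of general type, $K_X$ is big, so Theorem \ref{maincor} applies with $L=K_X$: part (i) produces an étale cover $p\colon X'\to X$ fitting into \eqref{maindiag} with $\big|K_{X'}+p^*K_X+P\big|$ very ample away from $p^{-1}\mathbf{B}_+(K_X)\cup\mathbf{Exc}(\alpha')$ for every $P\in\Pic^0(X')$, and part (ii), given $k\ge 0$, produces a (possibly different) such cover with $\big|K_{X'}+p^*K_X+P\big|$ separating $k$-jets off the same locus. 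As $p$ is étale, $p^*K_X=K_{X'}$, hence $K_{X'}+p^*K_X+P=2K_{X'}+P$. It therefore suffices to prove that $p^{-1}\mathbf{B}_+(K_X)\subseteq\mathbf{Exc}(\alpha')$, so that the bad locus collapses to $\mathbf{Exc}(\alpha')$.

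First I would record the purely formal identity $p^{-1}\mathbf{Exc}(\alpha)=\mathbf{Exc}(\alpha')$. The maps $p$ and $a$ in \eqref{maindiag} are finite, so over a point $x'\in X'$ the fiber of $\alpha\circ p$ has the same dimension as the fiber of $\alpha$ over $p(x')$, while the fiber of $a\circ\alpha'$ has the same dimension as the fiber of $\alpha'$ over $x'$; since $a\circ\alpha'=\alpha\circ p$, this gives $\mathbf{Exc}(\alpha')=\mathbf{Exc}(a\circ\alpha')=\mathbf{Exc}(\alpha\circ p)=p^{-1}\mathbf{Exc}(\alpha)$. Hence it is enough to establish the inclusion $\mathbf{B}_+(K_X)\subseteq\mathbf{Exc}(\alpha)$.

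This is the step where \cite{BBP13} enters: since $K_X$ is big, the main theorem of \emph{loc. cit.} implies that every irreducible component $V$ of $\mathbf{B}_+(K_X)$ is uniruled. Now every morphism from a rational curve to an abelian variety is constant, so the rational curves sweeping out $V$ are contracted by the Albanese map; thus a dense open subset of $V$ is covered by positive-dimensional fibers of $\alpha$, and since $\mathbf{Exc}(\alpha)$ is closed (upper semicontinuity of fiber dimension) we conclude $V\subseteq\mathbf{Exc}(\alpha)$. Running over all components yields $\mathbf{B}_+(K_X)\subseteq\mathbf{Exc}(\alpha)$, and combining with the previous paragraph $p^{-1}\mathbf{B}_+(K_X)\subseteq p^{-1}\mathbf{Exc}(\alpha)=\mathbf{Exc}(\alpha')$, which finishes the proof.

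I do not expect a genuine obstacle: the substantive content already lies inside Theorem \ref{maincor} (and hence Lemma \ref{Lemma1}), and the only nontrivial external input is the uniruledness statement of \cite{BBP13}. The single point that deserves care is verifying that \cite{BBP13} applies to $\mathbf{B}_+(K_X)$ itself — i.e., that the big divisor $K_X$ fits into the adjoint framework of \emph{loc. cit.} with $\Delta=0$ (writing $K_X=K_X+0$, or else passing to $\mathbf{B}_+(K_X)=\mathbf{B}(K_X-\varepsilon A)$ for $0<\varepsilon\ll 1$ and $A$ ample) — after which the reduction of Corollary \ref{maincor2} to Theorem \ref{maincor} is immediate.
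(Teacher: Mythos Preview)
Your proposal is correct and follows essentially the same approach as the paper: apply Theorem \ref{maincor} with $L=K_X$, use that $p^*K_X=K_{X'}$, and invoke \cite{BBP13} together with the fact that maps to abelian varieties contract rational curves to absorb the augmented base locus into $\mathbf{Exc}(\alpha')$. The only cosmetic difference is that the paper applies \cite{BBP13} directly on $X'$ (which is again of general type, being an \'etale cover of $X$) to obtain $\mathbf{B}_+(K_{X'})\subseteq\mathbf{Exc}(\alpha')$, whereas you argue on $X$ and pull back via $p^{-1}\mathbf{B}_+(K_X)\subseteq p^{-1}\mathbf{Exc}(\alpha)=\mathbf{Exc}(\alpha')$; by Lemma \ref{Lemma1} these are equivalent.
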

For unconditional  results,  by \cite[Theorem A]{JLT} we know  that only the tri-canonical map of a variety of general type and maximal Albanese dimension is birational onto the image. Moreover, in \cite[Theorem A]{BLNP}, the authors characterize the primitive varieties (from the point of view of generic vanishing theory)  of  general type and maximal Albanese dimension for which the bi-canonical map is not birational (\emph{cf}. also \cite{CCM-L} for the case of surfaces).
Other interesting results studying the positivity of linear systems on varieties of maximal Albanese dimension up to abelian \'etale covers are collected, among others,  in \cite[Theorem 4.1]{CJ}, \cite[Theorem B]{Luigi}, and \cite[Theorem 3.7]{BPS2}.

The techniques to establish  Theorem \ref{maincor} and Corollary \ref{maincor2}, and hence Theorem \ref{Mainmoving}, are somehow very  different from the techniques employed in \emph{loc. cit.}, which rely, among other things,  on generic vanishing theory, and the use of the eventual paracanonical map.   
Instead our arguments rely on the estimation of moving Seshadri constants along  towers of coverings  converging to the universal Albanese cover (\emph{cf}. \S \ref{conv}), and the relationship occurring between the largeness of moving Seshadri constants and separation of jets. Our approach elaborates a circle of ideas developed in \cite{DiC19} for varieties with large fundamental group, whose roots lie in the fundamental work of J. Koll\'ar \cite{KP}.
 
Throughout the paper, we work over the field of the complex numbers. At the same time, we wonder whether Theorem \ref{Mainmoving}, or its corollaries, extends to positive characteristic. In this regard, the paper \cite{Mur}  establishes the connection between the largeness of  Seshadri constants and  separation of jets of line bundles   over any algebraically closed field.

\subsection*{Acknowledgments} 
We thank Rob Lazarsfeld and Christian Schnell for their interest in this work and fruitful conversations. We also thank Rita Pardini for pointing out the references \cite{BPS1}, \cite{BPS2}, and for constructive comments. LFDC thanks the Mathematics Department at Stony Brook University for the ideal research environment he enjoyed at the beginning of this research project. He also gratefully acknowledges the start up fund of the University of Florida for support during the final stages of this work. LL thanks the University of Florida and the Max Planck Institute for Mathematics in Bonn, for financial supports and   excellent working conditions provided.

\section{Moving Seshadri Constants and Base Loci}\label{Preliminaries}
 
In this section, we describe  the  behavior of   augmented and restricted base loci under finite maps.   As an application, 
we extend \cite[Theorem 1.7]{DiC19} to the setting of  big line bundles and moving Seshadri constants.  Throughout this section, we denote by $X$ a smooth projective   variety, and by $D$ a   $\Q$-divisor.

\subsection{Augmented and Restricted Base Loci}
We denote by ${\rm Bs}(mD)_{\rm red}$ the base locus of the linear series $\big|mD\big|$ equipped with the reduced structure. The \emph{stable base locus} of $D$ is the Zariski-closed set
\[ \mathbf{B}(D) \; \; \stackrel{{\rm def}}{=} \;  \; \bigcap_m  \, {\rm Bs} (mD)_{{\rm red}}, \] 
where the intersection is over all positive integers $m$ such that $mD$ is   integral.    

\begin{defn}\label{loci}
	\begin{enumerate}
\item[(i)] The \emph{augmented base locus} of $D$ is the Zariski-closed set 
	\[ 	\mathbf{B}_+(D) \; \stackrel{ {\rm def} }{=} \; \bigcap_A \, \mathbf{B}(D \,  - \, A), \] 
	where the intersection is  over all ample $\Q$-divisors $A$. 
\item[(ii)] 	The \emph{restricted base locus} of $D$ is defined as
	\[ \mathbf{B}_-(D) \; \stackrel{ {\rm def} }{=} \; \bigcup_A \, \mathbf{B} (D \, + \,  A), \] 
	where the union is again over all ample $\Q$-divisors $A$.
\end{enumerate}
\end{defn}
Both $\mathbf{B}_+(D)$ and $\mathbf{B}_-(D)$ depend only on the numerical class of $D$. 
The  locus $\mathbf{B}_-(D)$ is a countable union of closed sets whose closure is contained in $\mathbf{B}_+(D)$.
The divisor $D$ is ample (\emph{resp.} big) if and only if $\mathbf{B}_+(D)=\emptyset$ (\emph{resp}. $\mathbf{B}_+(D)\neq X$). 
Moreover, $D$ is nef if and only if $\mathbf{B}_-(D) = \emptyset$. 
Finally, we note the series of inclusions $\mathbf{B}_-(D) \subseteq \mathbf{B}(D) \subseteq \mathbf{B}_+(D)$.
We refer to \cite{Nakamaye} for further properties about the augmented and restricted base loci. 
 
 We state two lemmas concerning the behavior of the loci $\mathbf{B}_+(D)$ and $\mathbf{B}_-(D)$ under pull-backs.
Let us begin with the restricted base locus (also called \emph{non-nef locus} in \cite{Bou04}).

%under no other assumption on the maps than surjectivity. Interestingly, the proof uses concepts and techniques from several complex variables and it relies on Boucksom \cite{Bou04} analytical characterization of this locus. 
 
\begin{lem}\label{Lemma0}
Let $p \colon  X' \to  X$ be a surjective morphism of smooth projective varieties. If $L$ is a
big line bundle on $X$, then $p^{-1} \mathbf{B}_- (L) = \mathbf{B}_- (p^*L)$.
\end{lem}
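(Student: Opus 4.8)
The plan is to reduce the identity to stable base loci and then prove the two inclusions separately. The reduction uses the standard fact that, for any fixed ample $\Q$-divisor $A$ on $X$, one has $\mathbf{B}_-(L)=\bigcup_{m\ge 1}\mathbf{B}\!\bigl(L+\tfrac1mA\bigr)$, and similarly on $X'$ with an ample $\Q$-divisor $A'$; this in turn rests on the elementary observations that adding an ample class to $E$ cannot enlarge $\mathbf{B}(E)$ — if a section of $kE$ is nonzero at a point, a power of it times a general section of a globally generated ample multiple exhibits that point outside $\mathbf{B}$ of the sum — and that the ample cone is open.

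For the inclusion $\mathbf{B}_-(p^*L)\subseteq p^{-1}\mathbf{B}_-(L)$ I argue contrapositively. If $x:=p(x')\notin\mathbf{B}_-(L)$, then $x\notin\mathbf{B}\!\bigl(L+\tfrac1mA\bigr)$ for all $m$, and pulling back sections gives $x'\notin\mathbf{B}\!\bigl(p^*L+\tfrac1mp^*A\bigr)$ for all $m$. Since $p^*A$ is only nef, I bridge the gap using openness of the ample cone: for each $j$ there is an $m$ with $\tfrac1jA'-\tfrac1mp^*A$ ample, and adding this ample class yields $\mathbf{B}\!\bigl(p^*L+\tfrac1jA'\bigr)\subseteq\mathbf{B}\!\bigl(p^*L+\tfrac1mp^*A\bigr)$. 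Thus $x'\notin\mathbf{B}\!\bigl(p^*L+\tfrac1jA'\bigr)$ for every $j$, i.e.\ $x'\notin\mathbf{B}_-(p^*L)$.

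The reverse inclusion $p^{-1}\mathbf{B}_-(L)\subseteq\mathbf{B}_-(p^*L)$ is the main obstacle, because transporting data from $X$ to $X'$ now goes the wrong way: a section nonzero at $x'$ need not descend to one nonzero at $p(x')$, owing to the remaining points of the fibre $p^{-1}(p(x'))$. When $p$ is finite I would pass to a Galois closure, reducing to $p$ finite Galois with group $G$. Then $p^*L$ is $G$-invariant, hence so is $\mathbf{B}_-(p^*L)$; therefore, if $x'\notin\mathbf{B}_-(p^*L)$, the entire fibre $p^{-1}(p(x'))$ avoids $\mathbf{B}_-(p^*L)=\bigcup_m\mathbf{B}\!\bigl(p^*(L+\tfrac1mA)\bigr)$ — here $p^*A$ \emph{is} ample since $p$ is finite. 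Fixing $m$ and passing to a common multiple, for each point of the fibre we get a section of a fixed multiple of $p^*(L+\tfrac1mA)$ nonzero there; a generic linear combination of these yields a single section nonzero at \emph{every} point of $p^{-1}(p(x'))$, and applying the norm map of $p$ to it produces a section of a multiple of $L+\tfrac1mA$ nonzero at $p(x')$. Letting $m$ vary gives $p(x')\notin\mathbf{B}_-(L)$.

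For a general surjective $p$ I would instead use the valuative description $\mathbf{B}_-(D)=\bigcup_v c_X(v)$, the union over divisorial valuations $v$ with positive asymptotic vanishing order $\sigma_v(D)>0$, together with the compatibility of $\sigma_v$ with pull-backs: a divisorial valuation over $X'$ restricts, up to its ramification index, to a divisorial valuation over $X$ with matching center, while valuations that restrict trivially do not contribute; this is precisely what makes $\mathbf{B}_-$ pull back cleanly, and is where the substantive work lies. (One can also reduce the general case to the finite one via Stein factorization, but the connected-fibre part again requires this valuative input rather than a direct section argument.)
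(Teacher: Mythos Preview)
Your approach is genuinely different from the paper's. The paper argues analytically: by Boucksom one has $\mathbf{B}_-(L)=\{x\in X:\nu(T_{\min},x)>0\}$ for any closed positive $(1,1)$-current $T_{\min}\in c_1(L)$ with minimal singularities; by \cite{BEGZ10} the pull-back $p^*T_{\min}$ is again a minimal-singularity current in $c_1(p^*L)$; and a local theorem of Favre \cite{Fav99} gives $\nu(p^*T,x')>0\iff\nu(T,p(x'))>0$ for any holomorphic map of generic maximal rank. Both inclusions, for arbitrary surjective $p$, follow in a few lines.

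Your section-pull-back argument for the inclusion $\mathbf{B}_-(p^*L)\subseteq p^{-1}\mathbf{B}_-(L)$ is correct, and your Galois-closure/norm argument for the reverse inclusion when $p$ is finite is also correct; this purely algebraic route has independent interest and in fact suffices for the only use made of the lemma later (Lemma~\ref{Lemma1}). The general surjective case, however, is not proved in your proposal. The valuative sketch is oriented the wrong way: to obtain $p^{-1}\mathbf{B}_-(L)\subseteq\mathbf{B}_-(p^*L)$ one must \emph{lift} a divisorial valuation $w$ over $X$ with $\sigma_w(L)>0$ to some $v'$ over $X'$ whose center contains the prescribed point $x'$, and then show $\sigma_{v'}(p^*L)>0$. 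This last step is exactly the ``substantive work'' you allude to, and it is delicate because when $p$ has positive-dimensional fibres the system $|mp^*L|$ (after an ample perturbation on $X'$) is strictly larger than the pull-back of any system from $X$, so the extra sections could a priori drive the asymptotic order along $v'$ to zero. The Stein-factorization remark does not close the gap either: for the connected-fibres factor $p_2$ one does have $\mathbf{B}(p_2^*M)=p_2^{-1}\mathbf{B}(M)$, but $\mathbf{B}_-(p_2^*L)$ requires perturbing by an ample class on $X'$, which is never of the form $p_2^*A$. Thus as a proof of Lemma~\ref{Lemma0} in its stated generality your argument is incomplete, and the paper's analytic method is precisely what makes the general case go through cleanly.
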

\begin{pf}
The proof follows easily once we describe the restricted base locus of  $L$ \emph{\`a la}  Boucksom \cite{Bou04}. By Proposition 3.6 in \cite{Bou04}, given a big line bundle $L$ and a closed positive $(1, 1)$-current with minimal singularities $T_{\rm min}\in [L]$, we have
\[
\mathbf{B}_{-}(L)=\{ \, x\in X \; \big| \; \nu(T_{\rm min}, x)>0 \, \},
\] 
where $\nu(T_{\rm min}, x)$ denotes the Lelong number of $T_{\rm min}$ at the point $x\in X$. 
By standard compactness properties of positive currents, one can always find a current with minimal singularities in any big (or even pseudo-effective) cohomology class. Moreover, two currents with minimal singularities have  the same Lelong numbers as they locally differ by a $(1,1)$-current of the form $\partial \overline{\partial}\varphi$ with $\varphi \in L^{\infty}$. For further details please refer to \cite[Section 2.8]{Bou04}. 

The pull-back  $p^* T_{\rm min} \in [p^* L]$ of the current $T_{\rm min}$ is again a closed positive current with minimal singularities (\emph{cf}. \cite[Proposition 1.12]{BEGZ10}).  
Hence we have
\[ \mathbf{B}_-(p^*L) \; = \; \{ \, x' \in X' \; \big| \;\nu(p^*T_{\rm min}, x') \; > \; 0 \, \}. \]
At this point we employ the following local result of Favre \cite[Theorem 2 or Corollary 4]{Fav99}. If  $f \colon (\C^{m}, 0) \rightarrow (\C^{n}, 0)$ is an holomorphic map generically of  maximal rank equal to n, then there exists a constant $C = C(f)>0$ depending only on $f$ such that 
\[ \nu(T, 0) \; \leq \; \nu(f^*T, 0) \; \leq \; C \, \nu(T, 0) \]
for any closed positive $(1, 1)$-current $T$ on $\C^n$. 
As $p$ is holomorphic and surjective (and therefore of generic maximal rank), we conclude that 
\[ \nu(p^*T_{\rm min}, x') \;  > \; 0 \;  \iff \;  \nu(T_{\rm min}, p(x')) \; > \; 0, \]
for any $x'\in X'$.  
\end{pf}

By means of Lemma \ref{Lemma0}, we can show that the augmented base locus has a similar property under \emph{finite} maps.

\begin{lem}\label{Lemma1}
Let $p \colon  X' \to  X$ be a finite surjective morphism of smooth projective varieties. If $L$ is a big line bundle on $X$, then 
 $p^{-1} \mathbf{B}_+ (L) = \mathbf{B}_+ (p^* L)$.
\end{lem}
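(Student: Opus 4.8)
The plan is to reduce Lemma~\ref{Lemma1} to Lemma~\ref{Lemma0} by replacing the augmented base locus of $L$ with the restricted base locus of a slight negative twist of $L$; the mechanism that makes this work is that a finite pull-back of an ample class is again ample, so small ample perturbations on $X$ become small ample perturbations on $X'$.

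First I would record a perturbation identity. Fix once and for all an ample line bundle $H$ on $X$. It is well known (\emph{cf}. \cite{Nakamaye}) that for a big $\Q$-divisor $D$ one has $\mathbf{B}_+(D)=\mathbf{B}(D-\varepsilon H)$ for all sufficiently small rational $\varepsilon>0$. Applying this both with $\varepsilon$ and with $\varepsilon/2$, and using the general inclusion $\mathbf{B}_-(D-\varepsilon H)\subseteq\mathbf{B}(D-\varepsilon H)$ together with the inclusion $\mathbf{B}(D-(\varepsilon/2)H)\subseteq\mathbf{B}_-(D-\varepsilon H)$ — valid because $(\varepsilon/2)H$ is an ample $\Q$-divisor and $D-(\varepsilon/2)H=(D-\varepsilon H)+(\varepsilon/2)H$ occurs in the union defining $\mathbf{B}_-(D-\varepsilon H)$ — one obtains the chain
\[
\mathbf{B}_+(D)\;=\;\mathbf{B}\big(D-(\varepsilon/2)H\big)\;\subseteq\;\mathbf{B}_-(D-\varepsilon H)\;\subseteq\;\mathbf{B}(D-\varepsilon H)\;=\;\mathbf{B}_+(D),
\]
so that $\mathbf{B}_+(D)=\mathbf{B}_-(D-\varepsilon H)$ for all sufficiently small rational $\varepsilon>0$.

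Now for the reduction. Since $p$ is finite, $p^*H$ is ample on $X'$; since $p$ is surjective, $p^*L$ is big on $X'$. Applying the perturbation identity on $X$ to the pair $(L,H)$ and on $X'$ to the pair $(p^*L,\,p^*H)$, I fix a single rational $\varepsilon>0$, also small enough that $L-\varepsilon H$ is big, for which both $\mathbf{B}_+(L)=\mathbf{B}_-(L-\varepsilon H)$ and $\mathbf{B}_+(p^*L)=\mathbf{B}_-(p^*L-\varepsilon\,p^*H)$ hold. Writing $\varepsilon=a/b$ with $a,b$ positive integers, $M:=bL-aH$ is a big line bundle on $X$ representing the class $b(L-\varepsilon H)$, and restricted base loci are unchanged by this scaling, so $\mathbf{B}_-(L-\varepsilon H)=\mathbf{B}_-(M)$ and $\mathbf{B}_-(p^*L-\varepsilon\,p^*H)=\mathbf{B}_-(p^*M)$. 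Lemma~\ref{Lemma0}, applied to the surjective morphism $p$ and the big line bundle $M$, gives $p^{-1}\mathbf{B}_-(M)=\mathbf{B}_-(p^*M)$, and concatenating the identities yields
\[
p^{-1}\mathbf{B}_+(L)\;=\;p^{-1}\mathbf{B}_-(L-\varepsilon H)\;=\;p^{-1}\mathbf{B}_-(M)\;=\;\mathbf{B}_-(p^*M)\;=\;\mathbf{B}_-(p^*L-\varepsilon\,p^*H)\;=\;\mathbf{B}_+(p^*L),
\]
which is the assertion. (The inclusion $\mathbf{B}_+(p^*L)\subseteq p^{-1}\mathbf{B}_+(L)$ can also be seen by hand: if $p(x')\notin\mathbf{B}_+(L)$, pull back a section of some $m(L-\varepsilon H)$ nonvanishing at $p(x')$ to a section of $m(p^*L-\varepsilon\,p^*H)$ nonvanishing at $x'$, using finiteness for $p^*(\varepsilon H)$ ample; but the chain above already delivers both inclusions.)

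I expect no serious obstacle once Lemma~\ref{Lemma0} is available: that lemma — through Favre's two‑sided comparison of Lelong numbers under a surjective map — supplies the only genuine analytic input, and it is also precisely where finiteness of $p$ is indispensable, since we invoke $p^*(\text{ample})=\text{ample}$; for a non‑finite surjection such as a blow‑down the equality $p^{-1}\mathbf{B}_+(L)=\mathbf{B}_+(p^*L)$ simply fails, so the hypothesis cannot be dropped. The only mild point of care is that the perturbation parameter must be chosen small both on $X$ and on $X'$; because $p^*H$ is a \emph{fixed} ample class on $X'$, this amounts to intersecting two cofinal ranges of $\varepsilon$, and a common choice always exists.
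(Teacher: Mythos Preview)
Your proof is correct and follows essentially the same approach as the paper's: both reduce to Lemma~\ref{Lemma0} via the identity $\mathbf{B}_+(D)=\mathbf{B}_-(D-\varepsilon A)$ for small ample $\Q$-divisors $A$ (the paper quotes this as \cite[Proposition~1.21]{Nakamaye}), and both use finiteness of $p$ precisely to ensure that $p^*A$ is ample so that the same identity applies on $X'$. The only cosmetic differences are that you spell out the chain of inclusions establishing the perturbation identity and clear denominators to stay with integral line bundles, whereas the paper simply remarks that Lemma~\ref{Lemma0} extends to $\Q$-divisors.
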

\begin{pf} 
By \cite[Proposition 1.21]{Nakamaye}, for all sufficiently small ample $\Q$-divisors $A$ on $X$ such that $L-A$ is a $\Q$-divisor, we have that 
\begin{equation}\label{EL}
\mathbf{B}_{-}(L-A)=\mathbf{B}_{+}(L-A)=\mathbf{B}_{+}(L).
\end{equation} 
Now, let $\{A_{i}\}$ be a sequence of such ample $\Q$-divisors converging to zero. Since the map $p\colon X^\prime\rightarrow X$ is finite, by Nakai--Moishezon criterion  we know that the $\Q$-divisors in the sequence $\{p^*A_{i}\}$ are ample and clearly converging to zero. Thus, for $i$ large enough, we have
\begin{align}\label{uno}
\mathbf{B}_{-}(p^*(L-A_{i}))=\mathbf{B}_{-}(p^*L-p^*A_{i})=\mathbf{B}_{+}(p^*L-p^*A_{i})=\mathbf{B}_{+}(p^*L).
\end{align}
Since the cone of big divisors is open, for $i$ big enough we have that $L-A_i$ is big. Note that Lemma \ref{Lemma0} extends to $\Q$-divisors. Thus, for $i$ large enough, we have
\begin{align}\label{due}
\mathbf{B}_{-} \big(p^*(L-A_{i}) \big) = p^{-1} \big(\mathbf{B}_{-}(L-A_i) \big)=p^{-1}\mathbf{B}_{+}(L-A_i)=p^{-1}\mathbf{B}_{+}(L).
\end{align}
By combining Equations \eqref{uno} and \eqref{due}, we obtain the desired identity.
\end{pf}

\begin{rem}
In the case of big and nef line bundles, a different proof of Lemma \ref{Lemma1} was given in \cite[Lemma 4.1]{DiC19}.
\end{rem} 

 \subsection{Moving Seshadri Constants}
 
The \emph{moving Seshadri constant} $\varepsilon_{\rm mov}(D ; x)$ is a measure of  the local positivity of a big divisor. It agrees with the usual  Seshadri constant $\varepsilon(D ; x)$ if the divisor  $D$ is nef, and moreover it characterizes the augmented base locus  $\mathbf{B}_+(D)$ as the set of points at which $\varepsilon_{\rm mov}(D ; x)$ vanishes (\emph{cf}. \cite[Section 6]{Ein09}). In order to define $\varepsilon_{\rm mov}(D ; x)$, we first recall the definition of the usual Seshadri constant.
 
The Seshadri constant of an integral nef divisor $F$ at a point $x\in X$ is the non-negative real number
$$\varepsilon(F ; x) \;  \stackrel{{\rm def}}{=} \;  \max \{ \, \varepsilon\geq 0 \; \big| \; \mu^*F \, - \, \varepsilon E \, \mbox{ is nef} \, \},$$
where  $\mu \colon \widetilde X \to X$ is the blow-up at $x$ with exceptional divisor $E$. 
By homogeneity, the definition of  $\varepsilon(F ; x)$   extends to  $\Q$-divisors.  

 \begin{defn} 
Let $D$ be a $\Q$-divisor.  The moving Seshadri constant of $D$ at $x\in X$ is 
\begin{equation*}
   \varepsilon_{\rm mov}(D ; x)   =
    \begin{cases*}
   \sup_{ f^*D \, =\,  A \, + \,  E } \;  \varepsilon(A ; x) & if $x \notin \mathbf{B}_+ (D) $ \\
   0   & if $x \in \mathbf{B}_+ (D).$
    \end{cases*}
  \end{equation*}
 The supremum  is taken over all projective morphisms $f \colon  \widetilde X \rightarrow X$ with $\widetilde X$ smooth such that $f$ is an isomorphism near $x$, and all decompositions $f^* D =  A + E$ where $A$ and $E$ are, respectively, an ample and effective $\Q$-divisors on $\widetilde X$ such that  $f^{-1}(x)$ is not contained in the support of $E$.  
\end{defn}

Both quantities $\varepsilon(D ; x)$ and $\varepsilon_{\rm mov}(D ; x)$ only depend  on the numerical equivalence class of $D$. 
We now need to recall two definitions from group theory.  

 \begin{defn}
Let $\Gamma$ be a finitely generated group and $H$ be a subgroup. We say that $\Gamma$ is $H$\emph{-separable} (or that $H$ is separable in $\Gamma$) if for every element $g \in \Gamma \ssm H$ there is a subgroup $K\subset \Gamma$ of finite index such that $H\subset K$ and $g\notin K$. 
Moreover, we say that $\Gamma$ is \emph{residually finite} if the trivial subgroup $ {\rm id}_{\Gamma} $ is separable in $\Gamma$. 
\end{defn}
  
We can now prove the main result this section. It extends \cite[Theorem 1.7]{DiC19} to the case of big line bundles. We refer to \cite[Definition 1.7]{KP} for the definition of large fundamental group.  

\begin{thm}\label{movingext}
Let $X$ be a smooth projective variety and $L$ a big line bundle. Assume that either the fundamental group $\pi_1 (X)$ is residually finite and large, or  the algebraic fundamental group  $\pi^{\rm alg}_1(X)$ is large. Then for any integer $N>0$, there exists an \'etale cover $p \colon  X' \rightarrow X$ such that 
$\varepsilon_{\rm mov}(p^*L ; x) \geq N$ for any $x \notin p^{-1} \mathbf{B}_+ (L) = \mathbf{B}_+ (p^*L)$.
\end{thm}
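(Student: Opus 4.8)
\begin{pf}
The plan is to reduce the assertion to the case of ample line bundles, which is \cite[Theorems 1.3 and 1.4]{DiC19}. First I would trade the big bundle $L$ for a fixed ample $\Q$-divisor: using the equality \eqref{EL}, choose a small ample $\Q$-divisor $A_0$ on $X$ with $\mathbf{B}_+(L-A_0)=\mathbf{B}_+(L)$, so that $L-A_0$ is big and, by the general inclusions recalled above, $\mathbf{B}(L-A_0)\subseteq\mathbf{B}_+(L-A_0)=\mathbf{B}_+(L)$. I claim that for \emph{every} finite \'etale cover $p\colon X'\to X$ one has $\varepsilon_{\rm mov}(p^*L;x')\geq\varepsilon(p^*A_0;x')$ for all $x'\notin\mathbf{B}_+(p^*L)$.

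To prove the claim, note that by Lemma \ref{Lemma1} we have $\mathbf{B}_+(p^*L)=p^{-1}\mathbf{B}_+(L)$, so $x:=p(x')\notin\mathbf{B}_+(L)\supseteq\mathbf{B}(L-A_0)$; hence there are an integer $m\geq 1$ with $m(L-A_0)$ integral and a divisor $D\in\big|m(L-A_0)\big|$ with $x\notin\Supp(D)$. Then $p^*D\in\big|m\,p^*(L-A_0)\big|$ satisfies $x'\notin\Supp(p^*D)$, and $p^*A_0$ is an ample $\Q$-divisor since $p$ is finite (Nakai--Moishezon). Therefore
\[
p^*L\;\sim_{\Q}\;p^*A_0\;+\;\tfrac1m\,p^*D
\]
is a decomposition of the kind appearing in the definition of the moving Seshadri constant: take $f=\mathrm{id}_{X'}$, ample part $p^*A_0$, and effective part $\tfrac1m p^*D$, whose support avoids $x'$. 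The claimed inequality follows at once.

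It then remains to make the right-hand side uniformly large by choosing $p$ appropriately. Pick $k\geq 1$ so that $kA_0$ is an ample line bundle on $X$. If $\pi_1(X)$ is residually finite and large, apply \cite[Theorem 1.3]{DiC19} to $kA_0$ with the integer $kN$; if instead $\pi^{\rm alg}_1(X)$ is large, apply \cite[Theorem 1.4]{DiC19} in the same way. In either case one obtains a finite \'etale cover $p\colon X'\to X$ with $\varepsilon\big(p^*(kA_0);x'\big)\geq kN$ for all $x'\in X'$, hence $\varepsilon(p^*A_0;x')\geq N$ for all $x'\in X'$ by homogeneity of the Seshadri constant. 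Combining this with the inequality of the previous step gives $\varepsilon_{\rm mov}(p^*L;x')\geq N$ for every $x'\notin\mathbf{B}_+(p^*L)=p^{-1}\mathbf{B}_+(L)$, as desired. The only non-formal ingredients in this argument are Lemma \ref{Lemma1}, which keeps the augmented base locus under control after pull-back (and without which the statement itself would be malformed), together with the ample case \cite[Theorems 1.3 and 1.4]{DiC19}; the place where the bigness of $L$ is genuinely used is the passage to $A_0$, since one needs $L-A_0$ to carry sections avoiding an arbitrary point of $X\ssm\mathbf{B}_+(L)$.
\end{pf}
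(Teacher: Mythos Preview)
Your proof is correct and follows the same overall strategy as the paper---reduce to the ample case via a decomposition $L\sim_{\Q}A+E$ and invoke \cite[Theorems 1.3, 1.4]{DiC19}---but your execution is somewhat tidier. The paper fixes finitely many decompositions $L\sim_{\Q}A_i+E_i$ with $\mathbf{B}_+(L)=\bigcap_i\supp(E_i)$, applies \cite{DiC19} to each $A_i$ separately, and then needs a tower of covers so that a single level $j\geq K=\max_i k_i$ handles all the $A_i$ at once. You instead choose a \emph{single} ample $A_0$ using \eqref{EL} so that $\mathbf{B}(L-A_0)\subseteq\mathbf{B}_+(L)$; this keeps the ample part fixed while letting the effective part $\tfrac1m D$ vary with the point $x$, and consequently a single application of \cite{DiC19} (and a single cover, no tower) suffices. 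The gain is brevity; the cost is that you lean on the finer input \eqref{EL} from \cite{Nakamaye}, whereas the paper's argument only needs the cruder characterization $\mathbf{B}_+(L)=\bigcap_i\supp(E_i)$.
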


\begin{pf} 
By \cite[Remark 1.5]{DiC19} it is enough to prove the theorem in the case  $\pi_1^{\rm alg}(X)$ is large.
We consider the universal algebraic cover $\widehat p \colon \widehat X \to X$  associated to the kernel $\widehat \Gamma$ of the natural homomorphism $\pi_1(X) \to \pi_1^{\rm alg}(X)$. As $\widehat \Gamma$ is separable in $\pi_1(X)$, there is  a sequence $\{\Gamma_j\}_{j\geq 0}$  of nested finite index normal subgroups of $\pi_1(X)$ such that $\Gamma_0=\pi_1(X)$ and  $\bigcap_{j\geq 0} \Gamma_j = \widehat \Gamma$. 
Consider the sequence 
\begin{align}\notag
\cdots \; \longrightarrow \; X_{j+1} \; \stackrel{p_{j+1}}{\longrightarrow} \; X_j \; \stackrel{p_{j}}{\longrightarrow} \; X_{j-1} \; \stackrel{p_{j-1}}{\longrightarrow} \; \cdots \; \stackrel{p_{1}}{\longrightarrow} \; X
\end{align} 
 of  regular  coverings associated to $\{\Gamma_j\}_{j\geq 0}$.
%By construction we have $\widehat X = \faktor{\widetilde X}{\widehat \Gamma}$ and $X_j = \faktor{\widehat X }{ \widehat \Gamma_j}$ where 
%$\widehat \Gamma_j = \faktor{\Gamma_j }{\widehat \Gamma}$.

Let $q_{k}\colon X_k\rightarrow X$ be the regular covering given by the composition $q_k=p_1\circ\ldots\circ p_k$. Also, let  $L\sim_{\Q} A + E $ be a decomposition of the big line bundle $L$ in $\Q$-divisors where $A$ is ample and $E$ effective. Moreover, let $N$ be an arbitrary positive integer.
By \cite[Theorem 1.3]{DiC19}, there exists  a positive integer $k=k(N, A)$ 
   such that 
 $\varepsilon(q^*_j A ; x )\geq N$ for any $j\geq k$. Thus, by the definition of moving Seshadri constant, we have
\begin{align}\notag
\varepsilon_{\rm mov}(q^*_j L ; x) \; \geq \; \varepsilon(q^*_j A ; x) \; \geq \;  N
\end{align}
for any index $j \geq k$ and $x \notin q^{-1}_j (E)$. 

Next, let  $L \sim_{\Q} A_i + E_i$ be finitely many decompositions of $L$ in $\Q$-divisors with $A_i$ ample and $E_i$ effective as above, such
that $$\mathbf{B}_+ (L) \; =  \; \bigcap_{i=1}^l \supp(E_i)$$
(\emph{cf}. \cite[Definition 1.2 and Remark 1.3]{Nakamaye}).
By the argument above, for any integer $i \in [ 1, l ]$ we may find an integer $k_i$ such that 
 $$\varepsilon_{\rm mov}(q^*_j L ; x) \; \geq \; \varepsilon(q^*_j A_i ; x) \; \geq \; N \quad \mbox{ for any } \quad  j\geq k_i \quad \mbox{ and } \quad x \notin q^{-1}_j (E_i).$$ 
 Define $K:=\max \{k_1, \ldots , k_l \}$.  In view of Lemma \ref{Lemma1}, we observe that for any index $j\geq 1$ and  $x \notin \mathbf{B}_+ ( q^*_j L)$ there exists  an index $s_j \in [ 1,  l ]$ such that $x \notin q^{-1}_j (E_{s_j})$. 
Hence for any $j \geq K$ and  $x \notin \mathbf{B}_+ (q^*_j L)$, we compute
\begin{align}\notag
\varepsilon_{\rm mov}(q^*_j L ; x) \; \geq \; \varepsilon(q^*_j A_{s_j} ; x) \; \geq \; N.
\end{align}

%
%Let $L \sim_{\Q} A_i + E_i$ be finitely many decompositions such
%that $\mathbf{B}_+(L) = \cap_{i=1}^l \supp(E_i)$.
%By  , given any such decomposition and integer $N>0$, there exists 
%an  \'etale coverings $q_i \colon X_i \rightarrow X$ 
%%\begin{align}\notag
%%\cdots \; \rightarrow \; X_{k+1} \; \rightarrow \; X_k \; \rightarrow \; X_{k-1} \; \rightarrow \; \cdots \; \rightarrow \; X,
%%\end{align}
%and a positive constant $k_i( N , A_i )$  such that $\varepsilon(q^*_j A_i) \geq N$ for all $j \geq k_i (N, A_i)$,
%
%
% Thus, for any $k\geq k_i( N, A_i )$ and $x \notin q^*_k (E_i)$ we have
%we have
%\begin{align}\notag
%\varepsilon_{\rm mov}(q^*_k L; x) \; \geq \; \varepsilon(q^*_k A_i ; x) \geq N.
%\end{align}
% For each $i \in\{1, \ldots , l\}$ let us choose $k_i$ as above and let us define $K := \max \{ k_1, \ldots , k_l\}$. Next, let us observe that because of Lemma \ref{Lemma1}, given any $x \notin \mathbf{B}_+(q^*_k L)$ we can find at least an index $j \in \{1, \ldots , l\}$ such that $x \notin q^{-1}_k (E_j)$. Thus, given any $k\geq K$ and for $x \notin \mathbf{B}_+(q^*_kL)$, by choosing an index $j$ as above, we compute
%\begin{align}\notag
%\varepsilon_{\rm mov}(q^*_k L; x) \geq \varepsilon(q^*_k A_j; x)\geq N.
%\end{align}
%The proof is then complete.
\end{pf}

\section{Varieties of Maximal Albanese Dimension}\label{Main Section}

In this section we prove Theorem \ref{Mainmoving}.

\subsection{\'{E}tale Covers Induced by the Albanese Variety}

Let $X$ be a smooth projective   variety of dimension $n$. The Albanese variety of $X$ is an abelian variety of dimension $g:=h^0(X,\Omega_X^1)$ defined as:
$$A \; = \; \Alb(X) \;  = \; \faktor{H^0(X, \Omega_X^1)^*}{H_1(X,\Z)_{{\rm t.f.}}},$$  where $H_1(X,\Z)_{{\rm t.f.}}$ denotes the torsion free part of $H_1(X,\Z)$.
Up to the choice of a point, integration of holomorphic $1$-forms on $X$ defines an Albanese map
$$\alpha \colon X \longrightarrow A$$ such that  
$\alpha^* H^0(A , \Omega_A^1) = H^0(X,\Omega_X^1)$. By the definition of the Albanese variety, there is an identification $\pi_1(A) = H_1(X,\Z)_{{\rm t.f.}}$ so that the 
 natural  homomorphism
$$ \alpha_\# \colon  \pi_1(X)\longrightarrow \pi_1(A) $$ 
is surjective.
It follows that any irreducible abelian \'etale cover $\pi \colon B \to A$ of $A$ induces via the fiber product construction
\[
\begin{tikzcd}
Y \arrow[r, "\beta"] \arrow[d, " \rho "]
& B \arrow[d, "\pi"] \\
X \arrow[r, "\alpha" ]
& A,
\end{tikzcd}
\]
an \'etale cover $\rho \colon Y \to X$ of $X$ with the same properties.  We refer to  $Y$ (or $\rho$) as the pull-back of  $B$ along  $\alpha$. 
 
\begin{defn}
Let $\Gamma$ be a residually finite group. We say that a sequence $\{\Gamma_i\}_{i\geq 0}$ of nested, normal, finite index subgroups  of $\Gamma$ is a \emph{cofinal filtration}  if
$\bigcap^{\infty}_{i=0} \Gamma_i \; = \;  {\rm id}_{\Gamma}$.

\end{defn} 
 
\begin{lem}\label{Basic Construction}
The group $\ker (\alpha_{\#})$ is separable in $\pi_1(X)$. Moreover, for any tower of coverings
$$\cdots \; \longrightarrow X_{i+1} \; \stackrel{s_{i+1}}{\longrightarrow} \; X_i \; \stackrel{s_i}{\longrightarrow} \; X_{i-1} \; \stackrel{s_{i-1}}{\longrightarrow}  \; \cdots \; \stackrel{s_1}{\longrightarrow}  X,$$
obtained by pulling-back along $\alpha$ a tower of coverings 
$$\cdots \; \longrightarrow A_{i+1} \; \stackrel{r_{i+1}}{\longrightarrow} \; A_i \; \stackrel{r_i}{\longrightarrow} \; A_{i-1} \; \stackrel{r_{i-1}}{\longrightarrow} \; \cdots \; \stackrel{r_1}{\longrightarrow}  A$$
associated to a cofinal filtration of $\pi_1(A)$, we have
\[ \ker( \alpha_\# ) \; = \; \bigcap^{\infty}_{i=1} \pi_1 (X_i). \]

\end{lem}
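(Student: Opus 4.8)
The statement has two assertions: that $\ker(\alpha_\#)$ is separable in $\pi_1(X)$, and the identification $\ker(\alpha_\#) = \bigcap_i \pi_1(X_i)$. The plan is to deduce both from the one structural fact at our disposal: $\alpha_\#\colon\pi_1(X)\to\pi_1(A)$ is surjective onto a residually finite group, and the tower upstairs is, by construction, the $\alpha$-pullback of a tower downstairs realizing a cofinal filtration of $\pi_1(A)$. I would first record the group-theoretic translation of the fiber-product construction: if $\pi\colon B\to A$ is the connected \'etale cover corresponding to a finite-index subgroup $H\le\pi_1(A)$, then the pulled-back cover $\rho\colon Y\to X$ corresponds to the subgroup $\alpha_\#^{-1}(H)\le\pi_1(X)$. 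This is essentially the fact that $\pi_1(Y) = \pi_1(X)\times_{\pi_1(A)}\pi_1(B)$ for the fiber product of spaces, which, since $\alpha_\#$ is surjective, collapses to $\pi_1(Y)\cong\alpha_\#^{-1}(H)$; I would also note $[\pi_1(X):\alpha_\#^{-1}(H)] = [\pi_1(A):H]$ so these are genuinely finite-index, and normality of $H$ gives normality of $\alpha_\#^{-1}(H)$.

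For separability, let $g\in\pi_1(X)\setminus\ker(\alpha_\#)$, so $\alpha_\#(g)\ne\mathrm{id}$ in $\pi_1(A)$. Since $\pi_1(A)$ is residually finite (it is a finitely generated free abelian group, being $H_1(X,\Z)_{\mathrm{t.f.}}$), there is a finite-index normal $H\le\pi_1(A)$ with $\alpha_\#(g)\notin H$. Then $K:=\alpha_\#^{-1}(H)$ is a finite-index normal subgroup of $\pi_1(X)$ containing $\ker(\alpha_\#)$ and not containing $g$; this is exactly separability of $\ker(\alpha_\#)$. For the intersection formula, unwind the construction: $\pi_1(X_i) = \alpha_\#^{-1}(\pi_1(A_i))$ where $\{\pi_1(A_i)\}$ is the chosen cofinal filtration of $\pi_1(A)$. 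Since preimage under a homomorphism commutes with arbitrary intersections, $\bigcap_i \pi_1(X_i) = \bigcap_i \alpha_\#^{-1}(\pi_1(A_i)) = \alpha_\#^{-1}\big(\bigcap_i \pi_1(A_i)\big) = \alpha_\#^{-1}(\mathrm{id}_{\pi_1(A)}) = \ker(\alpha_\#)$, using the cofinality hypothesis $\bigcap_i\pi_1(A_i)=\mathrm{id}$.

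The only genuine subtlety — and the step I would treat most carefully — is the identification $\pi_1(X_i) = \alpha_\#^{-1}(\pi_1(A_i))$, i.e.\ that the \'etale cover of $X$ pulled back from $A_i\to A$ is connected and has the claimed fundamental group. Connectedness is where surjectivity of $\alpha_\#$ is used: the fiber product $X\times_A A_i$ is connected precisely because the subgroup $\pi_1(A_i)$, together with the image $\alpha_\#(\pi_1(X)) = \pi_1(A)$, generates all of $\pi_1(A)$. (The paper in fact asserts this in the paragraph preceding the lemma, where it says the pulled-back cover ``has the same properties,'' meaning in particular it is \'etale and irreducible; one may simply cite that.) Once connectedness is in hand, the fundamental group computation is the standard description of the subgroup corresponding to a connected covering space, and everything else is formal manipulation with preimages of subgroups. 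I do not expect any analytic or geometric obstacle here; the content is entirely the dictionary between the geometry of the Albanese-induced tower and subgroup lattices, plus residual finiteness of a free abelian group.
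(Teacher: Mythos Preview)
Your proposal is correct and follows essentially the same approach as the paper: both arguments hinge on the covering-space identity $\pi_1(X_i)=\alpha_\#^{-1}\big(\pi_1(A_i)\big)$ (the paper writes this as $(s_1)_\#\pi_1(X_1)=\alpha_\#^{-1}\big((r_1)_\#\pi_1(A_1)\big)$) together with residual finiteness of the free abelian group $\pi_1(A)$. Your deduction of the intersection formula via ``preimage commutes with intersections'' is in fact slightly more direct than the paper's iterative phrasing, and your explicit discussion of connectedness (using surjectivity of $\alpha_\#$) makes precise a point the paper takes for granted.
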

\begin{pf}
We set $$\Gamma \, =  \, \pi_1(X) \quad  \mbox{and}\quad  \Lambda \, = \,  \pi_1(A) \, \simeq \, \Z^{2g}, $$ so that $A = \C^g / \Lambda$.  Let $\gamma\in \Gamma \ssm \ker(\alpha_\#)$ be an arbitrary element.  Hence $\alpha_{\#}(\gamma) \neq {\rm id}_{\Lambda}$, and there exists a finite index subgroup $\Lambda_1\leq \Lambda$ such that $\alpha_{\#}(\gamma)  \notin\Lambda_1$. Here we are using the fact that finitely generated free Abelian groups are  residually finite. Thus, let $a_1 \colon A_1 \to A$ 
be the \'etale cover associated to $\Lambda_1$, and let $s_1$
be the pull-back of $A_1$ along $\alpha$:
\[
\begin{tikzcd}
X_1 \arrow[r, "\alpha_1"] \arrow[d, "s_1"]
& A_1 \arrow[d, "r_1"] \\
X \arrow[r, "\alpha" ]
& A.
\end{tikzcd}
\]
By standard covering space theory, we note the following equality of groups 
\[
(s_1)_\# \big(\pi_1(X_1) \big) \; = \; \alpha^{-1}_\# \big((r_1)_\#\pi_1(A_1) \big).
\]
Hence the group $\Gamma_1 \stackrel{{\rm def}}{ = } \pi_1(X_1)$ contains the subgroup $\alpha_{\#}^{-1} ({\rm id}_{\pi_1(A)}) = \ker ( \alpha_{\#})$
and avoids $\gamma$ by our choice of $\Lambda_1$.
Moreover, the natural homomorphism  $$(\alpha_1)_{\#} \colon \pi_1(X_1) \to \pi_1(A_1)$$ is surjective as both $(s_1)_{\#}$ and $(r_1)_{\#}$ are injective homomorphism.
By reiterating this process with  $\alpha_1$ in place of $\alpha$, we generate a cofinal filtration $\{ \pi_1(A_i) \}_{i=1}^{\infty}$ of  $\pi_1(A)$ from which the lemma follows.

\end{pf}

\subsection{Convergence to the Universal Albanese Cover}\label{conv}

We continue to denote by $X$ a smooth projective variety, and by $\alpha \colon X \to A$ the Albanese map.
Moreover, we fix an ample line bundle   $L$  on $X$ and denote by $\omega\in [c_1(L)]$ the induced smooth K\"ahler metric. 
Following Lemma \ref{Basic Construction}, we set 
$$\Gamma \; = \; \Gamma_0  \, \stackrel{{\rm def}}{=} \, \pi_1(X) \quad \mbox{ and } \quad \Gamma_i \stackrel{{\rm def}}{=} \pi_1(X_i) \subset \Gamma.$$ 
Furthermore, we denote by $\overline{q}_i \colon X_i \to X$ the composition $s_1 \circ \ldots \circ s_i$ so that there is a commutative diagram
\begin{equation}\label{qi}
\begin{tikzcd}
X_i \arrow[r, "\alpha_i"] \arrow[d, "\overline{q}_i"]
& A_i \arrow[d, "a_i"] \\
X \arrow[r, "\alpha" ]
& A
\end{tikzcd}
\end{equation}
where $a_i = r_1 \circ \ldots \circ r_i$.  
Finally, we denote by $$\overline{q} \colon \overline{X}\longrightarrow X$$ the pull-back of the universal cover of $A$ along $\alpha$:
\begin{equation}\label{Albanese}
\begin{tikzcd}
\overline{X} \arrow[r, "\overline{\alpha}"] \arrow[d, "\overline{q}"]
& \C^g \arrow[d, "\overline{a}"] \\
X \arrow[r, "\alpha" ]
& A.
\end{tikzcd}
\end{equation}
Throughout the paper, we refer to the cover $\overline{q}\colon\overline{X}\longrightarrow X$ defined in \eqref{Albanese} as the \emph{universal Albanese cover} of $X$. Notice that, up to a finite cover, the universal Albanese cover coincides with the \emph{universal Abelian cover} of $X$. Indeed, these two infinite covers are the same if and only if $H_1(X,\Z)_{{\rm t.f.}}=H_1(X,\Z)$.

Equivalently, the cover $\overline{q}$ can   be defined as the  regular covering associated to the separable normal subgroup  $\ker(\alpha_\#)\lhd \pi_1(X)$. Hence, by Lemma \ref{Basic Construction}, we have 
$$\overline{\Gamma} \, \stackrel{{\rm def}}{=} \, \pi_1(\overline{X}) \, = \, \ker(\alpha_\#) \; = \;  \bigcap_{i\geq 1} \Gamma_i.$$ 
In Theorem \ref{Wallach}, we will show that the sequence of covers $\overline q_i\colon X_i \to X$ 
\emph{converges} to the universal Albanese cover in a precise way.  First, we note that
the coverings $X_i$ can be described as quotients of $\overline{X}$: 
\begin{equation}\label{pi2}
\begin{tikzcd}
\overline{X} \arrow[r, "\overline{\alpha}"] \arrow[d, "\overline{p}_i"]
& \C^g \arrow[d, "\overline{a}_i"] \\
X_i \arrow[r, "\alpha_i" ]
& A_i
\end{tikzcd}
\end{equation}
where 
$$X_i \; \simeq \; \faktor{\overline{X}}{ \, \overline{\Gamma}_i} \quad \mbox{ and }\quad \overline{\Gamma}_i \; \stackrel{{\rm def}}{=} \;  \faktor{\Gamma_i}{ \, \overline{\Gamma}}.$$ 
Secondly, we equip each $X_i$ with the K\"ahler metric $\overline{q}^*_i \omega$, and   $\overline X$ with $\overline q^*\omega$. Finally, we define the following quantities 
\begin{align}\notag
\overline r_i \; \stackrel{{\rm def}}{=} \; \inf\big\{ \, d(z, \overline{\gamma}_i z)\;  \big| \; z \in \overline{X},\quad  \overline{\gamma}_i \in \overline{\Gamma}_i, \quad \overline{\gamma}_i  \, \neq \,  {\rm id}_{\overline \Gamma_i}  \, \big\},
\end{align}
where the distance $d(-,-)$ is measured with respect to the metric $\overline q^*\omega$ on $\overline{X}$.

\begin{thm}\label{Wallach}
Let  $$\overline{p}_i \colon \big(\overline{X}, \overline{q}^{\, *}\omega \big) \longrightarrow \big( X_i, \overline{q}^{\, *}_i \omega \big)$$ be the Riemannian covering maps induced by the inclusions
$\overline \Gamma_i \subset  \overline{\Gamma}_0$ as in \eqref{pi2}. Then, for any $z \in\overline{X}$,  the maps
\begin{align}\label{iso}
\overline{p}_i \colon B\Big(z; \frac{\overline{r}_i}{2} \Big)  \; \longrightarrow \; \overline{p}_i \Big(B \Big(z; \frac{\overline{r}_i}{2} \Big) \Big)
\end{align}
are isometries  and 
\begin{align}\label{infinite}
\lim_{i \to \infty}\overline{r}_i \, = \, \infty.
\end{align}
\end{thm}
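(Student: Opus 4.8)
The plan is to prove the two assertions separately: \eqref{iso} is a soft covering-space computation, whereas \eqref{infinite} is the substantive point and rests on the cocompactness of the deck group action on $\overline X$. For \eqref{iso}, I would first note that, since $\overline q = \overline q_i\circ\overline p_i$, one has $\overline p_i^{\,*}\big(\overline q_i^{\,*}\omega\big)=\overline q^{\,*}\omega$, so $\overline p_i\colon\big(\overline X,\overline q^{\,*}\omega\big)\to\big(X_i,\overline q_i^{\,*}\omega\big)$ is a Riemannian covering, in particular an open local isometry. If $w_1,w_2\in B\big(z;\tfrac{\overline r_i}{2}\big)$ satisfy $\overline p_i(w_1)=\overline p_i(w_2)$, then $w_2=\overline\gamma_i w_1$ for some $\overline\gamma_i\in\overline\Gamma_i$, and
\[
d(w_1,\overline\gamma_i w_1)=d(w_1,w_2)\le d(w_1,z)+d(z,w_2)<\overline r_i,
\]
which forces $\overline\gamma_i=\mathrm{id}_{\overline\Gamma_i}$ by the definition of $\overline r_i$, hence $w_1=w_2$. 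Thus $\overline p_i$ restricted to $B\big(z;\tfrac{\overline r_i}{2}\big)$ is an injective local isometry onto the open set $\overline p_i\big(B(z;\tfrac{\overline r_i}{2})\big)$, i.e.\ an isometry of Riemannian manifolds.

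For \eqref{infinite}, the key structural facts are: the deck group $\overline\Gamma_0=\pi_1(X)/\overline\Gamma$ of $\overline q\colon\overline X\to X$ acts on $\big(\overline X,\overline q^{\,*}\omega\big)$ freely, properly discontinuously, by isometries, and cocompactly (because $\overline X/\overline\Gamma_0=X$ is compact); and each $\overline\Gamma_i$ is a \emph{normal}, finite-index subgroup of $\overline\Gamma_0$ with $\bigcap_{i\ge1}\overline\Gamma_i=\{\mathrm{id}\}$ and the $\overline\Gamma_i$ nested. Fix a compact set $K\subset\overline X$ with $\overline\Gamma_0\cdot K=\overline X$, say $K\subset\overline B(z_0;D)$. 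Given $z\in\overline X$, write $z=\gamma w$ with $w\in K$ and $\gamma\in\overline\Gamma_0$; for $\overline\gamma_i\in\overline\Gamma_i\setminus\{\mathrm{id}\}$, normality gives $\gamma^{-1}\overline\gamma_i\gamma\in\overline\Gamma_i\setminus\{\mathrm{id}\}$, and since $\gamma$ is an isometry, $d(z,\overline\gamma_i z)=d\big(w,\gamma^{-1}\overline\gamma_i\gamma\,w\big)$. Consequently
\[
\overline r_i \;=\; \inf\Big\{\, d(w,\overline\gamma_i w)\ \Big|\ w\in K,\ \overline\gamma_i\in\overline\Gamma_i\setminus\{\mathrm{id}\}\,\Big\}.
\]

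Now fix $R>0$ and let $S(R)=\{\,\sigma\in\overline\Gamma_0\mid \sigma K\cap\overline B(z_0;D+R)\ne\emptyset\,\}$, which is finite by proper discontinuity. If $\overline r_i<R$, then by the displayed formula there are $w\in K$ and $\overline\gamma_i\in\overline\Gamma_i\setminus\{\mathrm{id}\}$ with $d(w,\overline\gamma_i w)<R$; since $w\in\overline B(z_0;D)$, this gives $\overline\gamma_i w\in\overline\gamma_i K\cap\overline B(z_0;D+R)$, so $\overline\gamma_i\in S(R)$. But $S(R)$ is finite and $\bigcap_j\overline\Gamma_j=\{\mathrm{id}\}$ with the $\overline\Gamma_j$ nested, so there is $i^\ast=i^\ast(R)$ such that no nontrivial element of $S(R)$ lies in $\overline\Gamma_i$ once $i\ge i^\ast$; for such $i$, the inequality $d(w,\overline\gamma_i w)<R$ with $\overline\gamma_i\in\overline\Gamma_i\setminus\{\mathrm{id}\}$ is impossible, i.e.\ $\overline r_i\ge R$. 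As $R>0$ was arbitrary, $\overline r_i\to\infty$. I expect the main obstacle to be precisely the reduction of the infimum defining $\overline r_i$ from all of $\overline X$ to the compact fundamental set $K$ — this is exactly where cocompactness of $\overline\Gamma_0$ and normality of $\overline\Gamma_i$ in $\overline\Gamma_0$ are needed; once that is in place, the divergence of $\overline r_i$ is forced by the finiteness of $S(R)$ together with the fact that these finitely many ``short'' deck transformations are eventually expelled from the groups $\overline\Gamma_i$.
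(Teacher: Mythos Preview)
Your proof is correct and uses the same ingredients as the paper: for \eqref{iso}, injectivity from the very definition of $\overline r_i$ together with $\overline p_i^{\,*}(\overline q_i^{\,*}\omega)=\overline q^{\,*}\omega$; for \eqref{infinite}, cocompactness of the $\overline\Gamma_0$--action to reduce to a compact fundamental set, normality of $\overline\Gamma_i\lhd\overline\Gamma_0$ to make that reduction legitimate, proper discontinuity to get finiteness, and $\bigcap_i\overline\Gamma_i=\{\mathrm{id}\}$ to finish.

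The only difference from the paper is organizational. The paper argues \eqref{infinite} by contradiction: assuming $\overline r_i\le 2M$ along a subsequence, it translates the witnessing points into a compact fundamental domain $\overline D$, extracts a convergent subsequence $z'_{i_j}\to\overline z$, and then uses proper discontinuity on the limit to force the conjugated elements $\overline\gamma'_{i_j}$ to be eventually trivial, contradicting the hypothesis. Your argument is direct: you isolate the finite ``short--displacement'' set $S(R)\subset\overline\Gamma_0$ and observe that the nested intersection condition expels every nontrivial element of $S(R)$ from $\overline\Gamma_i$ once $i$ is large. This avoids the subsequence/limit step and makes the role of the filtration more transparent, at the cost of invoking completeness of $(\overline X,\overline q^{\,*}\omega)$ (Hopf--Rinow) to know that $\overline B(z_0;D+R)$ is compact; the paper's version hides this inside the compactness of $\overline D$. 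Both routes are standard and equivalent in strength.
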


\begin{pf}
 By definition of the numerical invariant $\overline{r}_i$, the map $\overline{p}_i \colon  B\big(z; \frac{\overline{r}_i}{2} \big) \rightarrow \overline{p}_i \big( B\big( z; \frac{\overline{r}_i}{2} \big) \big)$ is a biholomorphism for any $z \in \overline{X}$, and since we are pulling back the metric $\omega$ on $X$, it is an isometry. 

Now we prove \eqref{infinite}. We proceed by contradiction and assume that there  exist a positive constant $M$,  and  infinite sequences $\{ z_i \} \subset \overline{X}$ and $\{\overline{\gamma}_i \} \subset  \overline{\Gamma}_i$, such that $d(z_i, \overline{\gamma}_i z_i) \leq 2M$ and $\overline{\gamma}_i \neq {\rm id}_{{\overline \Gamma}_i}$. Let $D$ be a fundamental domain for $X$ in $\overline{X}$. Thus, $D\subset \overline{X}$ is a connected open set such that $\overline{q}\colon D \to X$ is injective and $\overline{q}\colon  \overline{D} \rightarrow X$ is surjective, where $\overline{D}$ is the closure of $D$ in $\overline{X}$. Thus, for any $i$, there exists an element $g_i \in  \overline{\Gamma}_0$ such that $g_i z_i \in\overline{D}$. Let us define $z'_i=g_i z_i$ and $\overline{\gamma}'_i = g_i \overline{\gamma}_i g^{-1}_i$. Since $\overline{\Gamma}_i$ is a normal subgroup of $\overline{\Gamma}_0$, we have that $\overline{\gamma}'_i \in \overline{\Gamma}_i$. By compactness of $\overline{D}$, there exists a subsequence $\{ z'_{i_j} \}$ converging to a point $\overline{z}\in\overline{D}$. Now since 
\[
d(z'_i, \overline{\gamma}'_i z'_i) \; = \;  d(g_i z_i , g_i \overline{\gamma}_i z_i ) \; = \;  d(z_i , \overline{\gamma}_i z_i),
\]
we have that 
\begin{align}\notag
d( \overline z, \overline{\gamma}'_{i_j} \overline{z} ) \;  \leq \; d(\overline{z}, z'_{i_j}) \, + \, 2M. 
\end{align}
Since $d(\overline{z}, z'_{i_j} ) \, \rightarrow \, 0$, we then conclude that, up to a subsequence, 
$\overline{\gamma}'_{i_j} \overline{z}$ converges to a point $w \in B(\overline{z} ; 2M + \varepsilon)$ for some $\varepsilon>0$. This implies that 
\begin{align}\notag
\overline{q} (\overline{z} ) \; = \; \overline{q}(\overline{\gamma}'_{i_j}  \overline{z}) \, \longrightarrow \, \overline{q}(w).
\end{align}
Thus, there exists $\overline{\gamma} \in \overline{\Gamma}_0$ such that $\overline{\gamma} w = \overline{z}$. We therefore conclude 
\begin{align}\notag
(\overline{\gamma}'_{i_j} \cdot \overline{\gamma}) w = \overline{\gamma}'_{i_j} \overline{z} \, \longrightarrow \, w.
\end{align}
Now the action of $\overline{\Gamma}_0$ on $\overline{X}$ is properly discontinuous, so that $\overline{\gamma}'_{i_j} \cdot \overline{\gamma}= {\rm id}_{\overline{\Gamma}_{i_j}}$ for all $j$ sufficiently large. Thus, we must have $\overline{\gamma}= {\rm id}_{\overline{\Gamma}_0}$ since 
$\bigcap \overline \Gamma_{i_j} = {\rm id}_{\overline{\Gamma}_0}$, which then implies the contradiction $\overline{\gamma}'_{i_j} = 
{\rm id}_{  \overline{\Gamma}_{i_j} }$.  

\end{pf}

\subsection{Virtual Unboundedness of Moving Seshadri Constants}
We keep notation as in  the previous subsection. In addition, we assume that  $X$  is  of  \emph{maximal Albanese dimension}, namely that the Albanese map 
$$\alpha \colon X \longrightarrow A$$ is generically finite onto the image.  
%We denote by $\overline q_i \colon X_i \to X$  a sequence of covers 
%as in \eqref{qi} such that the sequence $\overline p_i \colon \overline X \to X_i$ converges to the universal abelian cover  $\overline q \colon \overline X \to X$ in the sense of Theorem \ref{Wallach} and  
%\[ \ker( \alpha_\# ) \; = \; \bigcap^{\infty}_{i=1} \pi_1 (X_i) \] 
%(see Lemma \ref{Basic Construction}).
%Moreover, we denote by  $\alpha_i \colon X_i \to A_i$ the morphisms induced by $\overline q_i$ (see \eqref{qi}).
%We fix an ample line bundle $L$ on $X$ together with a smooth K\"ahler metric $\omega \in [c_1(L)]$. 
We denote by $\mathbf{Exc}(-)$ the exceptional locus of a morphism that is generically finite onto its image, \emph{i.e.}, the union of all its positive-dimensional fibers.

\begin{lem}\label{Basic Construction2}
For any integer $N>0$, there exists a positive  integer $k(N)$ such that 
$$ \Big( \, ( \overline{q}^*_i L )^{\dim Z} \cdot Z  \, \Big)  \; \geq \; N $$
for any $i\geq k(N)$  and irreducible subvariety $Z \subset X_i$ not entirely contained in $\mathbf{Exc}(\alpha_i)$.
\end{lem}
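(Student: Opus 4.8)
The plan is to reduce the self-intersection number $\big((\overline{q}_i^*L)^{\dim Z}\cdot Z\big)$, which is the $\overline{q}_i^*\omega$-volume of $Z$, first to a volume on the abelian variety $A_i$ and then to a Euclidean volume of an analytic subvariety of $\C^g$ contained in a ball whose radius goes to infinity, where the classical monotonicity formula supplies the required lower bound. Put $d:=\dim Z$; I would assume $d\geq 1$ (for $\dim Z=0$ the left-hand side is the length of $Z$). Observe first that $\alpha_i$ is generically finite onto its image, since $\alpha$ is and $a_i,\overline{q}_i$ are finite; the hypothesis $Z\not\subset\mathbf{Exc}(\alpha_i)$ then says exactly that $\alpha_i|_Z\colon Z\to\alpha_i(Z)$ is generically finite onto its image, so that $\dim\alpha_i(Z)=d$. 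This is the only use of the hypothesis, and it is precisely what prevents the subsequent volume estimates from collapsing.

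\emph{Step 1: comparison of metrics and push-down to $A_i$.} I would fix a translation-invariant Kähler form $\omega_A$ on $A=\Alb(X)$, with translation-invariant lift $\omega_0$ to $\C^g$, and set $\omega_{A_i}:=a_i^*\omega_A$. Since $X$ is compact and $\omega$ is a Kähler form, the eigenvalues of the semipositive form $\alpha^*\omega_A$ relative to $\omega$ are uniformly bounded, so $\omega\geq c\,\alpha^*\omega_A$ on $X$ for some $c>0$; pulling back along $\overline{q}_i$ and using $a_i\circ\alpha_i=\alpha\circ\overline{q}_i$ gives $\overline{q}_i^*\omega\geq c\,\alpha_i^*\omega_{A_i}$ on $X_i$. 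Restricting to the smooth locus of $Z$, taking $d$-th exterior powers (valid since $\overline{q}_i^*\omega$ is positive there), integrating, and applying the degree formula for the generically finite map $\alpha_i|_Z$ yields
\[
\big((\overline{q}_i^*L)^{d}\cdot Z\big)=\int_Z(\overline{q}_i^*\omega)^{d}\ \geq\ c^{d}\int_Z\alpha_i^*\!\big(\omega_{A_i}^{\,d}\big)\ \geq\ c^{d}\int_{\alpha_i(Z)}\omega_{A_i}^{\,d}\ =\ c^{d}\operatorname{vol}_{\omega_{A_i}}\!\big(\alpha_i(Z)\big).
\]

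\emph{Step 2: volume in $A_i$ via the flat covering.} Let $\rho_i$ be the $\omega_0$-length of a shortest nonzero vector of the deck lattice $\pi_1(A_i)\subset\pi_1(A)\cong\Z^{2g}$ of $\overline{a}_i\colon\C^g\to A_i$. Since the $\pi_1(A_i)$ form a cofinal filtration, $\bigcap_i\pi_1(A_i)=\{0\}$, and because every metric ball meets the discrete group $\pi_1(A)$ in a finite set, an elementary version of the argument in Theorem \ref{Wallach} gives $\rho_i\to\infty$. Choosing $y_0\in\alpha_i(Z)$, a lift $\tilde{y}_0\in\C^g$, and letting $\overline{W}_i$ be the irreducible component through $\tilde{y}_0$ of the pure $d$-dimensional analytic set $\overline{a}_i^{-1}(\alpha_i(Z))$, I would note that $\overline{a}_i$ is injective on the ball $B(\tilde{y}_0;\rho_i/2)$, hence an isometry onto its image (as $\overline{a}_i^*\omega_{A_i}=\omega_0$), and conclude
\[
\operatorname{vol}_{\omega_{A_i}}\!\big(\alpha_i(Z)\big)\ \geq\ \operatorname{vol}_{\omega_0}\!\big(\overline{W}_i\cap B(\tilde{y}_0;\rho_i/2)\big)\ \geq\ \kappa_d\,(\rho_i/2)^{2d},
\]
the last step being the monotonicity formula for a $d$-dimensional analytic subvariety of $(\C^g,\omega_0)$ passing through $\tilde{y}_0$, with $\kappa_d>0$ depending only on $d$.

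\emph{Step 3: conclusion.} Combining the two displays, $\big((\overline{q}_i^*L)^{\dim Z}\cdot Z\big)\geq c^{\dim Z}\kappa_{\dim Z}(\rho_i/2)^{2\dim Z}$ for every irreducible $Z\subset X_i$ of dimension in $\{1,\dots,n\}$ not contained in $\mathbf{Exc}(\alpha_i)$. For each fixed $d$ the right-hand side tends to $\infty$ with $i$, and only the finitely many values $d\in\{1,\dots,n\}$ occur, so $\varphi_i:=\min_{1\leq d\leq n}c^{d}\kappa_d(\rho_i/2)^{2d}\to\infty$; taking $k(N)$ with $\varphi_i\geq N$ for $i\geq k(N)$ finishes the proof. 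The step I expect to carry the real weight is the monotonicity formula together with the minimum over dimensions: this is the only place where genuine uniformity in $Z$ is needed, and it works because the monotonicity bound for an analytic subvariety through a point depends on $Z$ only through $\dim Z$ and the radius, not on $Z$ itself. A minor point to verify is that $\omega\geq c\,\alpha^*\omega_A$ holds on all of $X$, including along $\mathbf{Exc}(\alpha)$, which is fine since $\alpha^*\omega_A$ is only required to be semipositive there.
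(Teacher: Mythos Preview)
Your argument is correct and reaches the same conclusion, but by a genuinely different route from the paper's. The paper works upstairs on the universal Albanese cover $\overline{X}$: it first shows, by contradiction (a compact positive-dimensional analytic set in $\C^g$), that $Z$ cannot sit inside any ball $\overline{p}_i\big(B(z;\overline{r}_i/2)\big)$; then it invokes the uniformly bounded geometry of the family $\overline{q}_i^*\omega$ to get a uniform local volume bound ${\rm Vol}_l\big(B(p;r)\cap Z\big)\geq K_l$ on balls of a fixed small radius $r$, and packs such balls along $Z$ to obtain ${\rm Vol}_l(Z)\geq \alpha_l\,\overline{r}_i$, concluding via Theorem~\ref{Wallach}. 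You instead push the problem down to $A_i$ by comparing $\omega$ with the pull-back of a flat K\"ahler form (the inequality $\omega\geq c\,\alpha^*\omega_A$ on the compact $X$), and then apply Lelong's monotonicity formula directly in $(\C^g,\omega_0)$ on a ball of radius $\rho_i/2$, where $\rho_i\to\infty$ follows from cofinality of the lattices by the elementary finiteness of lattice points in a fixed Euclidean ball. Your approach avoids the bounded-geometry comparison on $\overline{X}$ and the ball-packing step, yields the sharper growth $(\rho_i/2)^{2d}$ rather than a linear bound in $\overline{r}_i$, and does not need Theorem~\ref{Wallach} in its full generality; the paper's approach, on the other hand, is phrased so as to generalize verbatim to situations (as in \cite{DiC19}) where the target of the relevant map is not flat and no global metric comparison like $\omega\geq c\,\alpha^*\omega_A$ is available. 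One small remark: your parenthetical on $\dim Z=0$ is misleading, since for a reduced point the intersection number is $1$ and does not exceed $N$; the statement is really only about positive-dimensional $Z$, as you correctly assume in the body of the argument.
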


\begin{pf}
We claim that for any integer $i$, and irreducible subvariety $Z\subset X_i$ not entirely contained in $\mathbf{Exc}(\alpha_i)$, we must have $Z\subsetneq \overline{p}_i \big(B(z; \frac{\overline{r}_i}{2} ) \big)$ for any ball $B( z; \frac{\overline{r}_i}{2} )$ in $\overline{X}$. We proceed by contradiction and  assume that this is not the case.  By Theorem \ref{Wallach}, we can then find a copy of $Z$ inside $\overline{X}$. By considering the commutative diagram
\[
\begin{tikzcd}
\overline{X} \arrow[r, "\overline{\alpha}"] \arrow[d, "\overline{p}_i"]
& \C^g \arrow[d, "\overline{a}_i"] \\
X_i \arrow[r, "\alpha_i" ]
& A_i
\end{tikzcd}
\]
we have that $Z$ inside $\overline{X}$ is not entirely contained $\mathbf{Exc}(\overline{\alpha})$. Thus, $\overline{\alpha}(Z)$ is a compact analytic subvariety of positive dimension inside $\C^g$ which is impossible. 
%The rest of the proof is essentially the same of the second part of the proof of \cite[Proposition 3.3]{DiC19} where one works with the universal abelian cover $\overline X$ and the invariant $\overline r_k$, instead of the universal algebraic cover $\widehat X$ and the associated invariant $\widehat r_k$. 

Next, we  observe that all   metrics $\overline{q}^*_i \omega$ have uniformly bounded geometry. In fact, they are pull backs of a fixed smooth K\"ahler metric on the compact manifold $X$ via \'etale covers.  In particular, there exist  positive constants $r=r(L), C_1,C_2$  
such that 
\begin{align}\label{inequality}
C_1\omega_E \; \leq \; \overline{q}^* \omega \; \leq \; C_2 \omega_E
\end{align}
on any ball $B( z ; r ) \subset\overline{X}$ (here $\omega_E$ denotes the standard Euclidean K\"ahler metric on $\C^g$). Moreover, we can arrange (\ref{inequality}) to hold true for any $\overline{q}^*_i \omega$, as well as for $\overline{q}^*\omega$ on balls of the same size. Thus, for any $i$, given any irreducible subvariety $Z\subset X_i$ of pure dimension $l$, there exists a positive constant $K_l = K( r , C_1 , C_2 ; l )$ such that for any point $p \in Z$ we have   
\begin{equation}\label{volume}
{\rm Vol}_l \big(B ( p ; r ) \cap Z \big) \; \geq \; K_l.
\end{equation}
 Here the volume  is computed by the integral of the $l$-th power of $\overline{q}^*_i \omega$ over the smooth part of $Z$. The inequality \eqref{volume} follows from the inequalities \eqref{inequality} and the same statement for the euclidean metric on $\C^g$, see for example \cite[Remark 3.5]{DiC19}. Recall now that for any $i$, given a point $p \in Z$,  the subvariety $Z$ is not entirely contained in  $B(p; \frac{\overline{r}_i}{2} )$. Thus we have that $${\rm Vol}_l (Z) \;  \geq  \; \alpha_l \overline{r}_i,$$ for some constant $\alpha_l = \alpha(K_l) > 0$. Next, let us observe that by construction we have
\begin{align}\notag
(\overline{q}_i^*L)^l \cdot Z \; = \;  {\rm Vol}_ l(Z) \; \geq \; \alpha_l  \, \overline{r}_i
\end{align}
for any $i$. By Theorem \ref{Wallach}, we have that $\overline{r}_i \rightarrow \infty$ and the lemma follows. 
%Thus, for any $N>0$ there exists a positive integer  $k(N)$ such that the coverings $\overline{q}_i \colon X_i \rightarrow X$ satisfy the  lemma for all $i\geq k(N)$. 
\end{pf}

We apply the previous result in order to show that the Seshadri constants of pull-backs of ample line bundles  under  \'etale covers coming from the Albanese variety are unbounded. In other words, they are \emph{virtually} unbounded in the class of finite abelian covers induced by the Albanese map.
%We also have the following result on Seshadri constants of smooth varieties of maximal Albanese dimension up to a finite cover.

\begin{lem}\label{Basic Construction3}
   For any  integer $N>0$, there exists a positive integer $k(N)$ such that 
$ \varepsilon(\overline{q}^*_i L ; x)  \; \geq \; N $ 
for any $i\geq k(N)$  and  $x \in X_i \ssm \mathbf{Exc}(\alpha_i)$.
\end{lem}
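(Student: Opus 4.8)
The plan is to deduce this lemma from Lemma \ref{Basic Construction2} by invoking the standard lower bound for Seshadri constants in terms of intersection numbers on all subvarieties through the point. Recall (see \cite[Proposition 5.1.9]{} — or rather the elementary inequality, which I would cite from Lazarsfeld's book or prove directly) that for a nef line bundle $H$ on a projective variety of dimension $n$ and a point $x$,
\[
\varepsilon(H ; x) \; \geq \; \inf_{x \in Z} \Big( \big( H^{\dim Z} \cdot Z \big) \Big)^{1/\dim Z},
\]
where the infimum is over all positive-dimensional irreducible subvarieties $Z$ containing $x$. Granting this, fix $N>0$ and set $n=\dim X = \dim X_i$. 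Apply Lemma \ref{Basic Construction2} with $N$ replaced by $N^n$: there is $k = k(N^n)$ such that $\big( (\overline q_i^* L)^{\dim Z} \cdot Z \big) \geq N^n$ for every $i \geq k$ and every irreducible $Z \subset X_i$ not entirely contained in $\mathbf{Exc}(\alpha_i)$.

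First I would observe that if $x \in X_i \ssm \mathbf{Exc}(\alpha_i)$, then no irreducible subvariety $Z$ passing through $x$ can be contained in $\mathbf{Exc}(\alpha_i)$, so Lemma \ref{Basic Construction2} applies to all such $Z$. Since $\dim Z \leq n$, we get $\big( (\overline q_i^* L)^{\dim Z} \cdot Z \big)^{1/\dim Z} \geq (N^n)^{1/\dim Z} \geq (N^n)^{1/n} = N$. Here I would note that $\overline q_i^* L$ is ample (hence nef), being the pull-back of an ample bundle under the finite map $\overline q_i$, so the displayed inequality for $\varepsilon$ is legitimately applicable. Taking the infimum over all $Z \ni x$ yields $\varepsilon(\overline q_i^* L ; x) \geq N$ for all $i \geq k(N^n)$, which is the assertion with $k(N) := k(N^n)$.

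The one genuine point requiring care — and the step I expect to be the main obstacle in a fully rigorous write-up — is the lower bound relating $\varepsilon(H;x)$ to intersection numbers on subvarieties through $x$. The cleanest route is to invoke the known characterization: if $\mu \colon \widetilde X \to X$ is the blow-up at $x$ with exceptional divisor $E$, then $\mu^* H - \varepsilon E$ fails to be nef precisely when there is an irreducible curve (more generally a subvariety) on which it has negative degree; chasing the intersection numbers of $\mu^* H - tE$ against the strict transform of a $\dim Z$-dimensional $Z$ and extracting the top self-intersection gives the bound above. I would simply cite this — it is classical, e.g.\ \cite[\S5.1]{} in Lazarsfeld, or it appears in \cite{DiC19} in exactly the form needed — rather than reprove it, since the whole strategy of the paper is to convert the volume/intersection estimate of Lemma \ref{Basic Construction2} into a Seshadri bound via this standard dictionary. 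Everything else is the bookkeeping of replacing $N$ by $N^n$ and checking that points off $\mathbf{Exc}(\alpha_i)$ see only subvarieties off $\mathbf{Exc}(\alpha_i)$.
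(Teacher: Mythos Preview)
Your argument rests on the inequality
\[
\varepsilon(H ; x) \; \geq \; \inf_{x \in Z} \Big( H^{\dim Z} \cdot Z \Big)^{1/\dim Z},
\]
but this is false: the inequality in \cite[Proposition~5.1.9]{Laz1} goes the \emph{other} way. What Lazarsfeld proves is that for every irreducible $Z\ni x$ of dimension $d$ one has $\varepsilon(H;x)^d \leq (H^d\cdot Z)/\mult_x Z$, hence $\varepsilon(H;x)\leq \inf_Z\big((H^{\dim Z}\cdot Z)/\mult_x Z\big)^{1/\dim Z}$. There is no elementary lower bound of the shape you want, because the characterization $\varepsilon(H;x)=\inf_{C\ni x}(H\cdot C)/\mult_x C$ allows curves with $\mult_x C$ arbitrarily large relative to $H\cdot C$. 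Concretely, Miranda's examples (see \cite[Example~5.2.1]{Laz1}) produce smooth surfaces with an ample integral line bundle $L$ and a point $x$ with $\varepsilon(L;x)<\delta$ for any prescribed $\delta>0$; since $L\cdot C\geq 1$ for every curve $C$, your inequality would force $\varepsilon(L;x)\geq 1$, a contradiction. So the step you flag as ``the one genuine point requiring care'' is not a citable fact but a wrong statement, and the whole strategy collapses.

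This is exactly why the paper does not argue directly from Lemma~\ref{Basic Construction2}. Instead it passes to an auxiliary line bundle $L'=mL-K_X$ (with $m$ chosen so that $L'$ is ample, $L'-K_X$ is nef, and multiples of $L'$ are free by Angehrn--Siu), applies Lemma~\ref{Basic Construction2} to $L'$ to force large intersection numbers on subvarieties through $x$, and then invokes the theorem of Ein--Lazarsfeld--Nakamaye \cite[Theorem~4.4]{ELN}: under those numerical and freeness hypotheses, $|K_{X_i}+\overline q_i^*L'|$ separates $s$-jets at $x$. Since $K_{X_i}+\overline q_i^*L'=m\,\overline q_i^*L$, Demailly's inequality \cite[Proposition~6.3]{Dem90} then gives $\varepsilon(\overline q_i^*L;x)\geq s/m$. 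The ELN theorem is the genuine bridge from intersection bounds to Seshadri bounds; it cannot be replaced by the one-line inequality you propose.
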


\begin{pf}
%The proof of the lemma is analogous to the proof of \cite[Theorem 1.3]{DiC19}. The only difference is that the  subvarieties $V$ should be taken in $X_i \ssm  \mathbf{Exc}(\alpha_i)$.  

Let $n$ be the dimension of $X$ and let  $m=m(L)$ be a positive  integer such that $mL-K_X$ is ample and $mL-2K_X$ is very ample. We define $L' = mL-K_X$.
By Anghern--Siu Theorem \cite{Ang} the line bundle $K_{X}+m_0 L'$ is base point free for all $m_0 \geq \binom{n+1}{2}+1$. We therefore have that
\[
K_{X}+\Big(\binom{n+1}{2}+1\Big)L^\prime+mL-2K_X=\Big(\binom{n+1}{2}+2\Big)L^\prime
\]	
is base point free. Concluding, $m_0L^\prime$ is base point free for any $m_0\geq\binom{n+1}{2}+2$. Now define the constant
\[
N' \; \stackrel{{\rm def}}{=} \; \max_{c \,= \, 0,  \ldots , n} \left\{ \left( 1 + \binom{n+1}{2} (n-c) + \frac{n!}{c!} \right)^c \left( n+s \right)^n+1 \right\}
\]
where $s>0$ is a fixed integer  to be determined later. By Proposition \ref{Basic Construction2}, there exists  an integer $i_0 = i_0(s)$ such that 
\[ 
\Big( \, (\overline{q}_i^* L')^{\dim V }\cdot V  \, \Big) \; \geq \; N'
\]
for any $i\geq i_0$  and any subvariety $V\subseteq X_i$ not entirely contained in $\mathbf{Exc}(\overline{\alpha}_i)$. 
Moreover, by construction, we have that the line bundle
\begin{align}\notag
\overline{q}^*_i(L' - K_X) \; = \; \overline{q}^*_i L' - K_{X_i} 
\end{align}
is nef and 
\begin{align}\notag
\overline{q}^*_i (m_0 L')
\end{align}
is base point free. By a theorem of Ein--Lazarsfeld--Nakamaye \cite[Theorem 4.4]{ELN}, we know that for any $x\in X_i \ssm \mathbf{Exc}(\overline{\alpha}_i)$ the linear system $\big|K_{X_i} + \overline{q}^*_i L'\big|$ separates  $s$-jets at $x$.
Since 
\begin{align}\notag
K_{X_i } + \overline{q}^*_i L' \; = \; m \overline{q}^*_i L,
\end{align}
by \cite[Proposition 6.3]{Dem90} we conclude that $\varepsilon( \overline{q}^*_i L ; x) \geq \frac{s}{m}$. Thus, given any $N>0$, it suffices to take an integer $s=s(N)\geq 0$ such that $\frac{s}{m} \geq N$. We point out  that  the positive integer $m$ depends only on the line bundle $L$. Given this choice for $s$, for any $i \geq i_0(s)$ the associated cover $\overline{q}_i \colon  X_i \rightarrow X$ satisfies the conclusion of the theorem.  
\end{pf}

We can now prove the main theorem stated in the Introduction.

\begin{proof}[Proof of Theorem \ref{Mainmoving}]

Let  $$L \; \sim_{\Q} \; A \, + \,  E $$ be a decomposition of the big line bundle $L$ in  $\Q$-divisors with $A$  ample and $E$ effective. 
%Let $\overline q_i \colon X_i \to X$  be a sequence of coverings as in \eqref{qi}. 
 By Lemma \ref{Basic Construction3}, given any positive constant $N$, there exists a positive integer $k=k(N, A)$ such that 
 $\varepsilon(\overline{q}^*_j A ; x )\geq N$ for any $x \notin \mathbf{Exc}(\alpha_j)$ and $j\geq k$. 
 Thus, by the definition of moving Seshadri constant, we have
\begin{align}\notag
\varepsilon_{\rm mov}(\overline q^*_j L ; x) \; \geq \; \varepsilon(\overline q^*_j A ; x) \; \geq \;  N
\end{align}
for any index $j \geq k$ and $x \notin \overline{q}^*_j (E) \cup \mathbf{Exc}(\alpha_j)$. 

Next, let  $L \sim_{\Q} A_i + E_i$ be finitely many decompositions of $L$ in $\Q$-divisors with $A_i$ ample and $E_i$ effective,   as above, 
satisfying $$\mathbf{B}_+ (L) \; =  \; \bigcap_{i=1}^l \supp(E_i).$$
By the above argument, for any integer $i \in [ 1, l ]$ we may find an integer $k_i$ such that 
 $$\varepsilon_{\rm mov}(\overline q^*_j L ; x) \; \geq \;\varepsilon(\overline{q}^*_j A_{i} ; x) \; \geq\; N \; \mbox{for any} \; x \notin \overline{q}^*_j (E_i) \cup\mathbf{Exc}(\alpha_j) \; \mbox{and} \; j \, \geq \, k_i.$$ 
 Define $K:=\max \{k_1, \ldots , k_l \}$.  In view of Lemma \ref{Lemma1}, we observe that   for any index $j\geq 1$ and  $x \notin \mathbf{B}_+ ( \overline{q}^*_j L)$ there exists  an index $s_j \in [ 1,  l ]$ such that $x \notin \overline q^{-1}_j (E_{i_j})$. 
Finally, fix any index $j \geq K$. Hence, for  any  $x \notin \mathbf{B}_+ (\overline{q}^*_j L) \cup \mathbf{Exc}(\alpha_j)$ we have
\begin{align}\notag
\varepsilon_{\rm mov}(\overline{q}^*_j L ; x) \; \geq \; \varepsilon(\overline{q}^*_j A_{ s_j} ; x) \; \geq \; N.
\end{align}
 \end{proof}
 
\subsection{Positivity of Linear Systems}

A linear system $\big|L\big|$ on a smooth projective variety $X$ is said to  separate $k$-jets at a point $x \in X$ if the natural homomorphism
$$H^0(X, L ) \; \longrightarrow \; H^0 \Big(X, L \otimes \faktor{\sO_X}{\frak{m}_x^{k+1}} \Big)$$ is surjective. 
In order to prove Theorem \ref{maincor} and Corollary \ref{maincor2} of the Introduction, we need to establish  the connection between moving Seshadri constants and separation of jets of adjoint line bundles. 
\begin{prop}\label{propseshadri}
Let $X$ be a smooth projective variety of dimension $n$ and $L$ a big line bundle.
\begin{itemize}
\item[(i)] If $\varepsilon_{\rm mov}(L ; x) > n+k$,  then $\big|K_{X}+L\big|$ separates $k$-jets at $x$.
\item[(ii)] Let $V\subsetneq X$ be a Zariski-closed subset. 
If $\varepsilon_{\rm mov}(L ; x) > 2n$ for all $x\notin V$, then $\big|K_{X}+L\big|$ is very ample away from $V$.
\end{itemize}
\end{prop}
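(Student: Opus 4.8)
The plan is to unwind the definition of the moving Seshadri constant so as to reduce both assertions to the classical pattern by which a large (ordinary) Seshadri constant of an \emph{ample} class forces separation of jets, via Nadel vanishing and a suitably constructed multiplier ideal on a birational modification; this is the ``moving'' analogue of the results of Demailly and of Ein--Lazarsfeld already invoked in \S\ref{conv}. Throughout, recall that $\varepsilon_{\rm mov}(L;x)>0$ forces $x\notin\mathbf{B}_+(L)$.

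For (i), since $\varepsilon_{\rm mov}(L;x)>n+k$, the definition provides a projective birational morphism $f\colon\widetilde X\to X$ with $\widetilde X$ smooth, $f$ an isomorphism near $x$, and a decomposition $f^*L=A+E$ with $A$ an ample $\Q$-divisor, $E\geq 0$ a $\Q$-divisor whose support avoids $y:=f^{-1}(x)$, and $\varepsilon(A;y)>n+k$. Because $X$ is smooth one has $K_{\widetilde X}+f^*L=f^*(K_X+L)+K_{\widetilde X/X}$ with $K_{\widetilde X/X}\geq 0$ exceptional (hence disjoint from $y$), and $f_*\sO_{\widetilde X}(K_{\widetilde X/X})=\sO_X$; this yields an identification $H^0(\widetilde X,K_{\widetilde X}+f^*L)=H^0(X,K_X+L)$ compatible with restriction of $k$-jets at $y$ and at $x$, so it is enough to separate $k$-jets of $K_{\widetilde X}+f^*L$ at $y$. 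I would then use $\varepsilon(A;y)>n+k$ together with the standard ``cutting down'' technique to produce an effective $\Q$-divisor $D\sim_{\Q}(1-\varepsilon_0)A$ (for $\varepsilon_0>0$ small) with $\mult_y D>n+k$ and such that the multiplier ideal $\sJ(\widetilde X,D+E)$ is trivial on a punctured neighbourhood of $y$ and contained in $\frak m_y^{k+1}$ near $y$; since $D+E\sim_{\Q}f^*L-\varepsilon_0 A$, the class $f^*L-(D+E)\sim_{\Q}\varepsilon_0 A$ is ample, so Nadel vanishing gives $H^1\big(\widetilde X,(K_{\widetilde X}+f^*L)\otimes\sJ(\widetilde X,D+E)\big)=0$, and the resulting surjection onto $H^0\big((K_{\widetilde X}+f^*L)\otimes\sO/\frak m_y^{k+1}\big)$ is precisely the desired separation of $k$-jets of $|K_X+L|$ at $x$.

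For (ii), since $2n\geq n+1$, part (i) with $k=1$ already shows that $|K_X+L|$ separates tangent vectors at every $x\notin V$, so it remains to separate any two distinct points $x_1,x_2\in X\ssm V$. Here I would first choose a single projective birational $f\colon\widetilde X\to X$ which is an isomorphism near both $x_1$ and $x_2$, with $f^*L=A+E$ as above and $\varepsilon(A;y_i)>2n$ for $y_i:=f^{-1}(x_i)$ — this simultaneous choice is legitimate because the modifications computing $\varepsilon_{\rm mov}$ may be taken to dominate resolutions adapted to finitely many prescribed points. Exploiting the \emph{strict} inequality, fix $\delta>0$ small with $2n/(1-\delta)<\min_i\varepsilon(A;y_i)$, pick effective $D_i\sim_{\Q}A$ with $\mult_{y_i}D_i>2n/(1-\delta)$ and $y_{3-i}\notin\supp D_i$, and set $D:=\tfrac{1-\delta}{2}(D_1+D_2)\sim_{\Q}(1-\delta)A$, so that $\mult_{y_i}D>n$ for $i=1,2$. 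After cutting $D$ down so that $y_1$ and $y_2$ become isolated points of its non-klt locus with $\sJ(\widetilde X,D)=\frak m_{y_i}$ there (the extra room in $2n/(1-\delta)<\varepsilon(A;y_i)$ leaving the multiplicities strictly above $n$), Nadel vanishing applied to $D+E\sim_{\Q}f^*L-(\text{ample})$ produces the surjection $H^0(X,K_X+L)\twoheadrightarrow (K_X+L)|_{x_1}\oplus(K_X+L)|_{x_2}$, i.e.\ $|K_X+L|$ separates $x_1$ and $x_2$, and therefore is very ample away from $V$.

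The routine ingredients (Nadel vanishing, elementary properties of multiplier ideals, $f_*\omega_{\widetilde X}=\omega_X$ for $X$ smooth) are standard. The technical heart is the construction of a multiplier ideal with the prescribed isolated co-support and the prescribed multiplicities — the Angehrn--Siu / Ein--Lazarsfeld ``cutting down'' argument — and, in (ii), bookkeeping the positivity budget so that $f^*L$ minus the chosen divisor stays ample while the multiplicities at $x_1$ and $x_2$ stay strictly above $n$: this is exactly where the strictness of $\varepsilon_{\rm mov}(L;x)>2n$ is used. A secondary point to verify carefully is the existence of a single modification simultaneously adapted to the two points in part (ii).
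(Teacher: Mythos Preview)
The paper does not actually prove this proposition: it simply refers to \cite[Proposition~6.8]{Ein09} for (i) and says that (ii) follows from that proof together with the idea of \cite[Proposition~5.1.19(ii)]{Laz1}. Your sketch is, in outline, an unpacking of those references, so the overall strategy (pass to a modification $f^*L=A+E$, exploit $\varepsilon(A;y)>n+k$, apply Nadel, transfer back via $f_*\omega_{\widetilde X}=\omega_X$) is the intended one.

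Two remarks on the details. In (i) the appeal to Angehrn--Siu ``cutting down'' is a detour. If you take $D=(1-\varepsilon_0)D_m/m$ with $D_m$ a \emph{general} member of $|mA|$ of multiplicity $>m(n+k)/(1-\varepsilon_0)$ at $y$ (which exists because $\mu^*A-cE_y$ is ample on $\mathrm{Bl}_y\widetilde X$ for every $c<\varepsilon(A;y)$, so for $m\gg0$ the relevant linear system is very ample), then $D$ has an ordinary point at $y$ and is smooth elsewhere, so $\sJ(D)$ already has cosupport $\{y\}$ with $\sJ(D)_y\subseteq\frak m_y^{k+1}$; no cutting is needed. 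The proof in \cite{Ein09} is in fact even more direct: blow up $\widetilde X$ at $y$, observe that $\nu^*A-(n+k)F$ is big and nef, apply Nadel to $K_Z+\nu^*f^*L-(n+k)F$ twisted by $\sJ(\nu^*E)$, and push down; since $\sJ(E)$ has cosupport disjoint from $y$, the quotient $\sO/(\frak m_y^{k+1}\cap\sJ(E))$ splits off $\sO/\frak m_y^{k+1}$ as a direct summand and the surjection onto $k$-jets follows.

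In (ii) the point you flag is a genuine gap, and your justification (``dominate resolutions adapted to finitely many points'') does not close it: on a common roof the pullbacks $h_i^*A_i$ are only big and nef, and averaging the two decompositions halves the Seshadri constants, so you would need $\varepsilon_{\rm mov}>4n$ rather than $2n$. The correct fix is to use the alternative description of the moving Seshadri constant via the free parts of $|mL|$ (see \cite[\S6]{Ein09}): if $f_m\colon X_m\to X$ resolves the base ideal of $|mL|$ and $f_m^*(mL)=M_m+F_m$ with $M_m$ free, then for every $x\notin\mathbf B_+(L)$ one has $\varepsilon_{\rm mov}(L;x)=\sup_m \varepsilon(M_m;x)/m$, and these modifications do not depend on $x$. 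Hence a single sufficiently divisible $m$ gives $\varepsilon(M_m/m;y_i)>2n$ for both $i$ simultaneously, with $y_i\notin\supp F_m$, and one then runs Lazarsfeld's two-point argument on the blow-up of $X_m$ at $y_1,y_2$ with the big and nef class $M_m/m$ in place of an ample one, carrying $F_m/m$ along as a multiplier ideal exactly as in (i).
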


We refer to \cite[Proposition 6.8]{Ein09} for the proof of $(i)$. 
Point $(ii)$ follows from the proof of point $(i)$ by employing the main idea of \cite[Proposition 5.1.19 (ii)]{Laz1}.

\begin{proof}[Proof of Theorem \ref{maincor}]
Set $n=\dim X$ and let $N$ be an arbitrary positive integer.
Note that if $D$ is a divisor, then $\mathbf{B}_+(D) = \mathbf{B}_+(D + P)$ and 
$\varepsilon_{\rm mov}(D  ; x) \;  = \; \varepsilon_{\rm mov}(D + P ; x)$ 
for any divisor $P$ with $c_1(P)=0$.
Hence, by Theorem \ref{Mainmoving}, there exists a commutative diagram as in \eqref{maindiag} such that 
$$\varepsilon_{\rm mov}(p^*L + P ; x) \;  = \; \varepsilon_{\rm mov}(p^*L; x) \; > \; N $$ for any $x\notin p^{-1}\mathbf{B}_+(L) \cup \mathbf{Exc}(\alpha')$ and $P\in \Pic^0(X')$. 
The corollary follows  by Proposition \ref{propseshadri}.
\end{proof}

\begin{proof}[Proof of Corollary \ref{maincor2}]
 We set $L=K_X$ in Theorem \ref{maincor}, and 
 observe that, by \cite{BBP13},  the augmented base locus $\mathbf{B}_+ (K_{X'})$ of a canonical divisor of a variety of general type is  uniruled. This yields the inclusion 
\[
\mathbf{B}_+(K_{X'}) \subseteq \mathbf{Exc}(\alpha')
\]
since any morphism from a smooth variety to an abelian variety contracts all rational curves. The proof is complete.
\end{proof}

%\begin{rem}
%As an application of \cite{BBP13} we have that if $X$ a smooth variety of general type with no rational curves, then the canonical bundle $\omega_X$ is big.
%   It is interesting to observe that this fact, when combined with the Corollary in page 267 of \cite{Gro91}, gives a positive answer to a question raised by Gromov at the bottom of page 267 in \cite{Gro91}. In particular, one obtains that compact K\"ahler hyperbolic manifolds (in the sense of Gromov) have indeed ample canonical class.
%   PROJECTIVITY PROBLEMS??
%\end{rem}

Finally, we refine Corollary \ref{maincor2} in the case of varieties with \emph{finite} Albanese map. In this setting, the result assumes the strongest possible formulation.

\begin{cor}
Let $X$ be a smooth projective variety such that the Albanese map $\alpha \colon X \to A$ is finite onto the image.
\begin{enumerate}
\item[(i)] If $X$ is of general type, then there exists a  commutative diagram as in \eqref{maindiag} such that  for every $P\in \Pic^0(X')$ the linear system $\big|2K_{X'} + P \big|$ defines an embedding of $X'$ into a projective space.
\item[(ii)] There exists a  commutative diagram as in \eqref{maindiag} such that  the linear system $\big|2K_{X'}  \big|$ induces the Iitaka fibration. 
\end{enumerate} 
 \end{cor}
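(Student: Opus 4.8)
The plan is to deduce both statements from Corollary~\ref{maincor2} by upgrading ``very ample away from $\mathbf{Exc}(\alpha')$'' to a genuinely global statement, using the hypothesis that $\alpha\colon X\to A$ is \emph{finite} onto its image. First I would observe that finiteness of $\alpha$ is inherited by $\alpha'$: in the diagram \eqref{maindiag} the map $\alpha'$ is, up to base change along the \'etale (hence finite) map $a$, a pull-back of $\alpha$, and more precisely $a\circ\alpha' = \alpha\circ p$ with $p$ finite and $\alpha$ finite, so $\alpha\circ p$ is finite; since $a$ is separated this forces $\alpha'$ to be finite as well. Consequently $\mathbf{Exc}(\alpha')=\emptyset$, because a finite morphism has no positive-dimensional fibers.

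With $\mathbf{Exc}(\alpha')=\emptyset$ in hand, part (i) is immediate: apply Corollary~\ref{maincor2}(i) to obtain an \'etale cover $p\colon X'\to X$ as in \eqref{maindiag} such that $\big|2K_{X'}+P\big|$ is very ample away from $\mathbf{Exc}(\alpha')=\emptyset$, i.e. very ample on all of $X'$, for every $P\in\Pic^0(X')$; a very ample linear system defines an embedding into projective space by definition. For part (ii), I would drop the ``general type'' hypothesis and instead argue that, regardless of the Kodaira dimension of $X$, one still has the inclusion $\mathbf{B}_+(K_{X'})\subseteq\mathbf{Exc}(\alpha')$: indeed $\mathbf{B}_+(K_{X'})$ is swept out by rational curves on which $K_{X'}$ has non-positive degree (this is the content of \cite{BBP13}, which applies to the non-nef/augmented base locus of the canonical class in general, not only in the general type case), and any rational curve is contracted by the map $\alpha'$ to an abelian variety, hence lies in $\mathbf{Exc}(\alpha')$. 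Thus when $\alpha'$ is finite, $\mathbf{B}_+(K_{X'})=\emptyset$, so $K_{X'}$ is ample on $X'$. Then I would invoke Theorem~\ref{maincor}(ii), or directly Proposition~\ref{propseshadri}, with $L=K_X$: choosing the cover so that $\varepsilon_{\rm mov}(p^*K_X;x)=\varepsilon_{\rm mov}(K_{X'};x)$ exceeds any prescribed bound for all $x\notin\mathbf{Exc}(\alpha')=\emptyset$, we get that $\big|2K_{X'}\big|=\big|K_{X'}+p^*K_X\big|$ separates $k$-jets everywhere; in particular, for $k=1$ it is very ample, so it is birational onto its image, and since $K_{X'}$ is ample the Iitaka fibration of $X'$ is the identity — hence $\big|2K_{X'}\big|$ induces the Iitaka fibration.

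The one point that requires a little care, and which I expect to be the main obstacle, is the use of \cite{BBP13} in the full generality of part (ii): I need the statement that the augmented (equivalently, in the uniruledness context, the diminished) base locus of $K_{X'}$ is uniruled \emph{without} assuming $X'$ is of general type. If \cite{BBP13} is only quoted in the excerpt for the general-type case, I would instead reduce to it: if $\kappa(X')=-\infty$ or more generally $X'$ is uniruled the conclusion that $\mathbf{B}_+(K_{X'})$ contains rational curves is automatic, and otherwise one passes to a suitable model; but since $X'$ admits a finite map to an abelian variety it carries no rational curves at all, so in fact $X'$ is not uniruled, $K_{X'}$ is pseudo-effective, and the ``uniruled $\subseteq$'' argument already shows $\mathbf{B}_+(K_{X'})$ can contain no curves, forcing it to be empty once we also rule out isolated points (which it cannot contain, being the stable base locus intersected appropriately, a pure-codimension phenomenon is not needed — any nonempty $\mathbf{B}_+$ of a big line bundle on a variety with no rational curves and finite Albanese map would have to meet a fibre of $\alpha'$, but those are points). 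I would make this last chain precise in the write-up; everything else is a formal consequence of Corollary~\ref{maincor2} and the vanishing of $\mathbf{Exc}(\alpha')$.
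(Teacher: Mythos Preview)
Your argument for part~(i) is correct and matches the paper's: once $\alpha$ is finite onto its image, so is $\alpha'$, hence $\mathbf{Exc}(\alpha')=\emptyset$, and Corollary~\ref{maincor2}(i) immediately gives the embedding statement.

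For part~(ii), however, there is a genuine gap. The entire Seshadri-constant machinery of the paper---Theorem~\ref{Mainmoving}, Theorem~\ref{maincor}, Proposition~\ref{propseshadri}---applies only to \emph{big} line bundles $L$. Setting $L=K_X$ therefore already presupposes that $X$ is of general type, which is exactly the hypothesis you are trying to drop. Your proposed workaround, showing $\mathbf{B}_+(K_{X'})\subseteq\mathbf{Exc}(\alpha')=\emptyset$, cannot succeed in general: if $\kappa(X')<\dim X'$ then $K_{X'}$ is not big and by definition $\mathbf{B}_+(K_{X'})=X'$, not the empty set. In particular your conclusion ``$K_{X'}$ is ample'' would force $X'$ (and hence $X$) to be of general type, which fails already for $X$ an abelian variety. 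The result of \cite{BBP13} you invoke concerns the uniruledness of $\mathbf{B}_+(K_X)$ when this locus is a \emph{proper} subvariety; it says nothing when $K_X$ fails to be big.

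The paper proceeds quite differently for~(ii): it invokes Kawamata's structure theorem \cite[Theorem~13]{Kawamata}, which for a variety with finite Albanese map produces an \'etale cover of the form $X_1\times B\to X$ with $X_1$ of general type of dimension $\kappa(X)$ and $B$ an abelian variety. One then applies part~(i) to the general-type factor $X_1$ to obtain $X_1'\to X_1$ with $\big|2K_{X_1'}\big|$ an embedding, and sets $X':=X_1'\times B$. Since $K_{X'}=\mathrm{pr}_1^*K_{X_1'}$, the system $\big|2K_{X'}\big|$ is the pull-back of an embedding along the first projection, i.e.\ it realises the Iitaka fibration. This reduction to the general-type case via Kawamata's theorem is the missing idea in your proposal.
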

 
\begin{proof} 
The first point is application of Theorem \ref{maincor} as  $\mathbf{Exc}(\alpha') = \emptyset$. For the second point, we set $n=\dim X$.
By \cite[Theorem 13]{Kawamata}, we can find an \'etale cover $X_1\times B \rightarrow X$ such that $X_1$ is a
variety of general type of dimension $k={\rm kod}(X)$, and $B$ is an abelian variety of dimension $n-k$.  By    point $(i)$, there exists an \'etale cover $X'_1\rightarrow X_1$ such  that the map associated to the linear system $\big|2K_{X'_1}  \big|$ is an embedding. Now, the \'etale cover $X':=X'_1\times B \rightarrow X$ is such that  the linear system $\big|2K_{X'}  \big|$ induces the Iitaka fibration. 
%Let us then define $X' :=X'_1\times B$, and the proof is complete.
\end{proof}

%%%%%%%%%%%%%%%%%%%%

\end{document}